\newtheorem{theorem}{Theorem}[section]
\newtheorem{lemma}[theorem]{Lemma}
\newtheorem{proposition}[theorem]{Proposition}
\theoremstyle{definition}
\newtheorem{definition}[theorem]{Definition}
\theoremstyle{remark}
\newtheorem{remark}[theorem]{Remark}
\numberwithin{equation}{section}
\begin{document}

\title{Non-trivial smooth families of $K3$ surfaces}
\author{David Baraglia}

\address{School of Mathematical Sciences, The University of Adelaide, Adelaide SA 5005, Australia}

\email{david.baraglia@adelaide.edu.au}



\date{\today}


\begin{abstract}
Let $X$ be a complex $K3$ surface, ${\rm Diff}(X)$ the group of diffeomorphisms of $X$ and ${\rm Diff}_0(X)$ the identity component. We prove that the fundamental group of ${\rm Diff}_0(X)$ contains a free abelian group of countably infinite rank as a direct summand. The summand is detected using families Seiberg--Witten invariants. The moduli space of Einstein metrics on $X$ is used as a key ingredient in the proof.
\end{abstract}

\maketitle


\section{Introduction}

There is considerable interest in understanding the topology of diffeomorphism groups of $4$-manifolds. While much remains unknown there has been some recent progress.

\begin{itemize}
\item{Ruberman gave examples of simply-connected smooth $4$-manifolds for which $\pi_0({\rm Diff}(X)) \to \pi_0({\rm Homeo}(X))$ is not injective \cite{rub1,rub2}.}
\item{Watanabe constructed many non-trivial homotopy classes in ${\rm Diff}(S^4)$, thereby disproving the $4$-dimensional Smale conjecture \cite{wat}.}
\item{Baraglia--Konno showed that $\pi_1({\rm Diff}(K3)) \to \pi_1({\rm Homeo}(K3))$ is not surjective \cite{bako3}.}
\item{Smirnov showed that if $X$ is a hypersurface in $\mathbb{CP}^3$ of degree $d \neq 1,4$, then $\pi_1({\rm Diff}(X))$ is non-trivial. Note that this result excludes $K3$, which corresponds to hypersurfaces of degree $d=4$ \cite{sm}.}
\end{itemize}

The main result of this paper is the following.

\begin{theorem}
Let $X$ be a $K3$ surface. Then $\pi_1({\rm Diff}_0(X))$ contains a free abelian group of countably infinite rank as a direct summand.
\end{theorem}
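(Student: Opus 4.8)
The plan is to produce countably many families Seiberg--Witten invariants which together give a surjective homomorphism $F\colon\pi_1(Diff(X)_0)\to\bigoplus_{e\in S}\mathbb Z$ admitting a section; the image of the section is then the asserted direct summand. Here $S$ is a countably infinite set of $(-2)$-classes in $\Lambda:=H^2(X;\mathbb Z)$. For $e\in\Lambda$ with $e^2=-2$ let $\mathfrak s_e$ be the spin$^c$ structure with $c_1(\mathfrak s_e)=2e$; since $2\chi(X)+3\sigma(X)=0$ the formal dimension of its Seiberg--Witten moduli space is $d(\mathfrak s_e)=\tfrac14 c_1(\mathfrak s_e)^2=e^2=-2=-\dim S^2$. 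Thus for a smooth family $X\to E\to S^2$ with a fibrewise spin$^c$ structure restricting to $\mathfrak s_e$, a generic fibrewise metric and perturbation make the parametrised moduli space a compact oriented $0$-manifold (the homology orientation extends because the monodromy, being in $Diff(X)_0$, acts trivially on cohomology), and its signed count is an integer $FSW(E,\mathfrak s_e)$. The one subtlety is the appearance of reducibles, which occurs over fibres whose period $3$-plane $P_g\subset H^2(X;\mathbb R)$ becomes orthogonal to $e$. This is where the moduli space of Einstein metrics enters: by Hitchin's theorem every Einstein metric on $X$ is hyperk\"ahler, and for a hyperk\"ahler metric $g$ the complement $P_g^{\perp}$ contains no $(-2)$-class (such a class would have zero area and self-intersection $-2$, hence give a rational curve of zero area). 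Therefore a fibrewise metric that is Einstein on every fibre has no reducibles for \emph{any} $\mathfrak s_e$ at once; with this, one defines $FSW$ on such families, verifies additivity under fibrewise connected sum over $S^2=S^2\#S^2$ and vanishing on trivial bundles, and so obtains homomorphisms $FSW_{\mathfrak s_e}\colon\pi_1(Diff(X)_0)\to\mathbb Z$.

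The next step uses the global structure of the moduli space of unit-volume Einstein metrics on $X$. Write $Gr^+(3,19)$ for the Grassmannian of positive-definite $3$-planes in $H^2(X;\mathbb R)$, a contractible symmetric space of dimension $57$. By global Torelli for hyperk\"ahler metrics together with Yau's theorem, the period map identifies the space of \emph{marked} Einstein metrics with $\mathcal D:=Gr^+(3,19)\setminus\bigcup_{e}W_e$, where $e$ ranges over all $(-2)$-classes and $W_e:=\{P:P\perp e\}\cong Gr^+(3,18)$ has codimension $b^+(X)=3$. Deleting a locally finite family of codimension-$3$ submanifolds from a contractible manifold leaves a simply connected space with large $\pi_2$: for each $(-2)$-class $e$ a small $2$-sphere $\sigma_e$ linking $W_e$ once represents a class in $\pi_2(\mathcal D)$, and since the pairwise intersections $W_e\cap W_{e'}$ ($e'\neq\pm e$) have strictly smaller dimension one may choose $\sigma_e$ with linking number $\delta_{ee'}$ against every $W_{e'}$, $e'\in S$, once $S$ is a countably infinite set of $(-2)$-classes containing no $\pm$-pair. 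Using that the generic Einstein metric has trivial isometry group, the forgetful map $\widetilde{Ein}(X)\to\mathcal D$ is a principal $Diff(X)$-bundle over the relevant part of $\mathcal D$, whose connecting homomorphism gives $\partial\colon\pi_2(\mathcal D)\to\pi_1(Diff(X))=\pi_1(Diff(X)_0)$. Set $\alpha_e:=\partial(\sigma_e)$; by construction the $X$-bundle $E_{\alpha_e}\to S^2$ carries a fibrewise Einstein metric whose bundle of self-dual harmonic $2$-forms is the pullback along $\sigma_e$ of the tautological $3$-plane bundle on $Gr^+(3,19)$.

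The heart of the proof is the evaluation
\[
FSW_{\mathfrak s_{e'}}(\alpha_e)=\pm\,\delta_{ee'}.
\]
If $e\neq e'$ then $\sigma_e$ is null-homotopic in $Gr^+(3,19)\setminus W_{e'}$, so $E_{\alpha_e}$ admits a fibrewise metric avoiding the $\mathfrak s_{e'}$-wall that extends over a $3$-disk of wall-avoiding metrics; pushing the parametrised moduli space through this disk (equivalently, deforming the associated Bauer--Furuta map) forces $FSW_{\mathfrak s_{e'}}(\alpha_e)=0$. If $e=e'$ then $\sigma_e$ is essential in $Gr^+(3,19)\setminus W_{e}$, so every fibrewise metric meets the wall; taking one meeting the reducible locus transversally, the resulting wall-crossing count is, up to a universal sign, the linking number times the ordinary Seiberg--Witten invariant of $K3$ at its unique basic class, which is $1$, so $FSW_{\mathfrak s_e}(\alpha_e)=\pm1$. (Equivalently, $FSW_{\mathfrak s_{e'}}$ of such an $S^2$-family can be read off from the families index theorem as a characteristic number of $E_{\alpha_e}$ equal to $\pm$ the linking number.) Because the period $2$-sphere of a fixed family $E_\alpha$ is compact and the $W_{e'}$ are locally finite, it links only finitely many walls, so $F:=(FSW_{\mathfrak s_e})_{e\in S}$ does take values in the direct sum $\bigoplus_{e\in S}\mathbb Z$.

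To finish: after replacing $\alpha_e$ by $-\alpha_e$ where necessary, $F(\alpha_e)$ is the $e$-th standard generator of $\bigoplus_{e\in S}\mathbb Z$, so $F$ is a split surjection with section $\mathbf 1_e\mapsto\alpha_e$; hence $\pi_1(Diff(X)_0)=A\oplus\ker F$ with $A=\langle\alpha_e:e\in S\rangle\cong\bigoplus_{e\in S}\mathbb Z$ free abelian of countably infinite rank, as claimed. I expect the main obstacle to be the evaluation $FSW_{\mathfrak s_e}(\alpha_e)=\pm1$: one must show that the unavoidable wall-crossing in $E_{\alpha_e}$ contributes a unit rather than $0$ or a larger integer, which requires a careful local analysis of the parametrised moduli space near its reducible locus (an explicit Bauer--Furuta or index-theoretic computation). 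Proving that the Einstein period map takes the stated form, and that $\partial$ is well defined in spite of the finite isometry groups, are the remaining key points.
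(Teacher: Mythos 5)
Your overall strategy is the same as the paper's: use families Seiberg--Witten invariants at the spin$^c$-structures $\mathfrak{s}_e$ with $c_1 = 2e$, $e^2 = -2$, build the classes $\alpha_e = \partial(\sigma_e)\in\pi_1(Diff(X)_0)$ from small spheres linking the walls $W_e$ in the Einstein period domain, evaluate $sw_{\mathfrak{s}_{e'}}(\alpha_e) = \pm\delta_{ee'}$, and conclude that the $\alpha_e$ span a split free summand. However, there are several points where the proposal leaves genuine gaps that the paper's argument is specifically designed to close, and the most serious one is structural rather than just a computation you have postponed.

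The central difficulty is that your construction of $FSW_{\mathfrak{s}_e}$ relies on fibrewise Einstein metrics to avoid reducibles, but fibrewise Einstein metrics exist only on the families pulled back from $T_{Ein}$, not on the family $E_f$ attached to an arbitrary $f\in\pi_1(Diff(X)_0)$. To conclude a \emph{direct summand} you need a retraction $F\colon\pi_1(Diff(X)_0)\to\bigoplus_{e}\mathbb Z$, i.e.\ the invariants must be defined and homomorphic on the whole group, not just on the image of $\partial$. Since $b^+(X) = 3$ equals $\dim S^2 + 1$, wall-crossing is present in general, and you cannot dodge the problem: you must single out a chamber for every family $E_f$ over $S^2$ and show the resulting integer is well defined and additive. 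This is precisely what the paper does with the ``canonical chamber'' $\mathcal{C}_0(E,\mathfrak{s})$: after trivialising the local system $\mathcal H^2(X)$ over the simply connected base and projecting $\mathcal H^+_g(X)$ onto a fixed maximal positive subspace $H$, one picks a constant perturbation of norm exceeding the sup of the projected wall; Lemma~\ref{lem:chamb} shows the resulting chamber is independent of all choices. Without something equivalent, your $FSW$ is not a homomorphism from $\pi_1(Diff(X)_0)$ at all.

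The same issue infects your finiteness argument. You invoke local finiteness of the walls $W_{e'}$ and compactness of the ``period 2-sphere'' to deduce that only finitely many $FSW_{\mathfrak{s}_e}$ are nonzero, but for a general $f\in\pi_1(Diff(X)_0)$ there is no period sphere, and $(g,0)$ with $g$ arbitrary need not lie in the canonical chamber. The paper's Theorem~\ref{thm:finite} handles this by combining a priori compactness over all spin$^c$-structures with the wall-crossing formula: it reduces $sw_{\mathfrak s}(f)$ (for all but finitely many $\mathfrak s$) to the degree of $w(\mathfrak s)/\lVert w(\mathfrak s)\rVert_H\colon S^{n+1}\to S(H)$ and shows this degree vanishes using a connectedness argument on $\{c\in H^2(X;\mathbb R)\mid c^2 = N\}$ together with an orientation-reversing isometry. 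Your bare appeal to local finiteness does not reach this conclusion.

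Finally, as you yourself flag, the key evaluation $FSW_{\mathfrak s_e}(\alpha_e) = \pm1$ is left as ``the main obstacle.'' The paper's mechanism for this is worth noting: with the Einstein fibrewise metric (hence zero scalar curvature) and zero perturbation, the Weitzenb\"ock formula forces every solution to be reducible, while the wall $w(\alpha)$ is shown to be nonvanishing, so the moduli space is empty; the invariant is therefore entirely a wall-crossing term, and equals $\deg(w(\alpha))$ by the previously established wall-crossing identity. The paper then computes $w(\alpha)\colon S^2\to H\cong\mathbb R^3$ explicitly in the frame $\omega_i=\theta_i-\epsilon x_i\delta/2$, obtaining degree $1$ when $\alpha=\delta$ and degree $0$ otherwise. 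This is more direct than arguing via null-homotopy in $Gr_3\setminus W_{e'}$ (which by itself does not give an extension of the family over a 3-disk, since the spheres map to the walled-off region $\mathcal D$, not all of $Gr_3$). To complete your proof you would need, at minimum, the canonical-chamber construction for arbitrary families, the accompanying wall-crossing formula identifying $sw_{\mathfrak s}(f)$ with a degree, and the explicit degree computation.
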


In particular, $\pi_1({\rm Diff}_0(X))$ is not finitely generated. This contrasts with a recent theorem of Bustamante--Krannich--Kupers \cite{bkk} who showed that if $M$ is a closed smooth manifold of dimension $2n \ge 6$ with finite fundamental group, then the homotopy groups of ${\rm Diff}(M)$ are finitely generated.

The direct summand in the above theorem is detected using families Seiberg--Witten invariants. The families Seiberg--Witten invariants were originally defined in \cite{liliu}. In this paper we consider a reformulation of the Seiberg--Witten invariants which we now outline. Let $X$ be a compact, oriented smooth $4$-manifold with $b^+(X) > 1$. Let $\mathfrak{s}$ be a spin$^c$-structure on $X$ and let
\[
d(X , \mathfrak{s}) = \frac{ c_1(\mathfrak{s})^2 - \sigma(X) }{4} - 1 +b_1(X) - b^+(X)
\]
be the expected dimension of the Seiberg--Witten moduli space, where $\sigma(X)$ is the signature of $X$. If $d(X, \mathfrak{s}) \le -2$, then we construct a map
\[
sw_{\mathfrak{s}} : \pi_{-d(X,\mathfrak{s})-1}( {\rm Diff}_0(X) ) \to \mathbb{Z}.
\]
The definition, roughly, is as follows. Let $f \in \pi_{-d(X,\mathfrak{s}) -1}({\rm Diff}_0(X))$. Then by the clutching construction, $f$ defines a family $E_f \to S^{-d(X,\mathfrak{s})}$ over the sphere with fibres diffeomorphic to $X$. The moduli space of solutions to the Seiberg--Witten equations on the family $E_f$ with spin$^c$-structure $\mathfrak{s}$ is compact and has expected dimension $d(X,\mathfrak{s}) - d(X,\mathfrak{s}) = 0$. For a generic perturbation the families moduli space is a compact oriented $0$-manifold and $sw_{\mathfrak{s}}(f)$ is defined as a signed count of the points of this moduli space. If $b^+(X) \le -d(X,\mathfrak{s})+1$, then one has to deal with wall crossing phenomena. However, we show that in the above situation there is a canonically defined chamber and we take $sw_{\mathfrak{s}}(f)$ to be the Seiberg--Witten invariant defined with respect to this chamber. This subtlety is crucial to this paper, since we will be concerned with the case $b^+(X) = 3$ and $d(X,\mathfrak{s}) = -2$.

We prove a number of results concerning the invariants $sw_{\mathfrak{s}}$.

\begin{theorem}
Let $X$ be a compact, oriented, smooth $4$-manifold with $b^+(X) > 1$. Then for each spin$^c$-structure with $d(X,\mathfrak{s}) = -(n+1) \le -2$, the map 
\[
sw_{\mathfrak{s}} : \pi_n({\rm Diff}_0(X)) \to \mathbb{Z}
\]
is a group homomorphism.
\end{theorem}

\begin{theorem}\label{thm:f}
Assume that $b_1(X)=0$. Then for any given $f \in \pi_n({\rm Diff}_0(X))$, $sw_{\mathfrak{s}}(f)$ is non-zero for only finitely many spin$^c$-structures with $d(X,\mathfrak{s}) = -(n+1)$.
\end{theorem}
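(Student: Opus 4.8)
The plan is to deduce this from the standard a priori estimates for the Seiberg--Witten equations, applied fibrewise and made uniform over the compact base of the family $E_f \to S^{n+1}$.

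First I would fix once and for all a smooth family of Riemannian metrics $\{g_b\}_{b \in S^{n+1}}$ on the fibres of $E_f$, together with a generic family of self-dual $2$-form perturbations $\{\eta_b\}$, chosen so as to compute $sw_{\mathfrak{s}}(f)$ in the canonical chamber. The key observation is that these choices do not depend on $\mathfrak{s}$: one family of metrics and (small) perturbations serves for every spin$^c$-structure with $d(X,\mathfrak{s}) = -(n+1)$ at once. Since $S^{n+1}$ is compact, the quantities $\Lambda_1 := \sup_b \max(0, -\min_X s_{g_b})$ and $\Lambda_2 := \sup_b \|\eta_b\|_{C^0}$ are finite. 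Now suppose $sw_{\mathfrak{s}}(f) \neq 0$. Then, after an arbitrarily small generic perturbation of the $\eta_b$, the associated families moduli space is a non-empty compact $0$-manifold, so there is a fibre $(X_b, g_b)$ carrying a solution $(A,\Phi)$ of the perturbed Seiberg--Witten equations for $\mathfrak{s}$, with perturbation $C^0$-close to $\eta_b$.

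Next I would apply the Weitzenb\"ock/maximum principle estimate: any such solution satisfies $\|\Phi\|^2_{C^0} \le \Lambda_1 + c\,\Lambda_2$ and hence $\|F_A^+\|^2_{L^2} \le C$, where $c, C$ depend only on $\Lambda_1$, $\Lambda_2$ and $\mathrm{vol}(X, g_b)$, in particular not on $\mathfrak{s}$; reducible solutions ($\Phi = 0$) satisfy $F_A^+ = $ perturbation and are covered as well. Writing $\frac{i}{2\pi}F_A = h + d\beta$ with $h$ the $g_b$-harmonic representative of $c_1(\mathfrak{s})$, the self-dual harmonic form $h^+$ is $L^2$-orthogonal to $d^+\beta$, so that $4\pi^2 \|h^+\|^2_{L^2} \le \|F_A^+\|^2_{L^2} \le C$. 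Thus the $g_b$-self-dual component $c_1(\mathfrak{s})^{+}_{g_b}$ has $L^2$-norm bounded independently of $\mathfrak{s}$. Because $d(X,\mathfrak{s})$ is fixed, $c_1(\mathfrak{s})^2$ is a fixed integer, and combining this with the Hodge-theoretic identity $\|c_1(\mathfrak{s})^{+}_{g_b}\|^2_{L^2} - \|c_1(\mathfrak{s})^{-}_{g_b}\|^2_{L^2} = c_1(\mathfrak{s})^2$ gives a bound on $\|c_1(\mathfrak{s})^{-}_{g_b}\|_{L^2}$, hence on the full Hodge norm $\|c_1(\mathfrak{s})\|_{g_b}$, by a constant independent of $\mathfrak{s}$.

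Finally, as $b$ ranges over the compact set $S^{n+1}$ the positive-definite quadratic forms $v \mapsto \|v\|^2_{g_b}$ on the finite-dimensional space $H^2(X;\mathbb{R})$ vary continuously, hence are uniformly comparable to a fixed background norm; therefore all $c_1(\mathfrak{s})$ arising from $\mathfrak{s}$ with $sw_{\mathfrak{s}}(f) \ne 0$ lie in a single bounded subset of $H^2(X;\mathbb{R})$. Since $c_1(\mathfrak{s})$ is an integral class and $H^2(X;\mathbb{Z})/\mathrm{torsion}$ is a lattice, only finitely many values of $c_1(\mathfrak{s})$ occur; as the torsion subgroup of $H^2(X;\mathbb{Z})$ is finite and (using $b_1(X)=0$) the map $\mathfrak{s} \mapsto c_1(\mathfrak{s})$ is finite-to-one, only finitely many $\mathfrak{s}$ occur. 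The step requiring the most care is the first one: one must know that the canonical chamber can be realized by a single family of metrics together with uniformly bounded perturbations valid for all $\mathfrak{s}$ simultaneously, and that a non-vanishing invariant really does furnish a fibrewise solution subject to the uniform estimate. This relies on the description of the canonical chamber established earlier in the paper; once it is in place, the remainder is the classical Seiberg--Witten compactness argument run in families.
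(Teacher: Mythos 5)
Your argument has a genuine gap at exactly the step you flag as ``requiring the most care,'' and the paper's proof is devoted almost entirely to filling that gap.

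The problem is the opening claim that ``one family of metrics and (small) perturbations serves for every spin$^c$-structure with $d(X,\mathfrak{s})=-(n+1)$ at once'' in the sense of lying in the \emph{canonical chamber} for all of them. This is false. The canonical chamber $\mathcal{C}_0(E,\mathfrak{s})$ is defined using a reference perturbation whose projection to $H$ has norm exceeding $R_g = \sup_b \|\varphi_g(w_b)\|_H$, and the wall $w$ here is $b \mapsto 2\pi c_1(\mathfrak{s})^{+_{g_b}}$ --- it depends on $\mathfrak{s}$, and $R_g \to \infty$ as $|c_1(\mathfrak{s})|$ grows. So if you fix a single bounded family of perturbations $\eta$, then for all but finitely many $\mathfrak{s}$ the pair $(g,\eta)$ will \emph{not} automatically lie in $\mathcal{C}_0(E,\mathfrak{s})$; whether it does is a topological question (a degree) rather than a norm question. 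Consequently, the compactness estimate you run --- which does correctly show that $SW(E_f,\mathfrak{s},g,\eta)=0$ for all but finitely many $\mathfrak{s}$ with the \emph{fixed} pair $(g,\eta)$ --- does not by itself control $sw_{\mathfrak{s}}(f)$; the two differ by a wall-crossing term whenever $(g,\eta)\notin\mathcal{C}_0(E,\mathfrak{s})$.

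The paper confronts this head on. It fixes $(g,0)$, proves the finiteness you prove (that $\mathcal{M}(E_f,\mathfrak{s},g,0)$ is empty for all but finitely many $\mathfrak{s}$), and then splits into cases on $b^+(X)$ relative to $\dim B = n+1$. When $b^+ \ne n+2$ the primary wall-crossing obstruction lives in $H^{b^+-1}(S^{n+1};\mathbb{Z})=0$, so the invariant is chamber-independent and your argument goes through. The hard case is $b^+(X)=n+2$, where the wall-crossing formula gives $sw_{\mathfrak{s}}(f) = \int SW(\mathfrak{s},0) + \deg(w(\mathfrak{s})/\|w(\mathfrak{s})\|_H)$, and the paper must show this extra degree term vanishes for all but finitely many valid $\mathfrak{s}$. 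That is done by a separate geometric argument: for classes $c$ with $c^2 = N$ (the value forced by $d(X,\mathfrak{s})=-(n+1)$) and lying outside a compact ball, the map $b\mapsto c^{+_{g_b}}$ is non-vanishing, the set of such $c$ is connected, so the degree is a constant $\kappa$, and a reflection of $H$ forces $\kappa=-\kappa=0$. None of this appears in your proposal, and it cannot be replaced by the Weitzenb\"ock bound because the degree term has nothing to do with solutions of the equations --- it is an obstruction-theoretic correction. So the argument as written does not establish the theorem; it establishes only the finiteness of $\mathfrak{s}$ with $SW(E_f,\mathfrak{s},g,0)\ne 0$, which is one ingredient of the paper's proof, not the whole statement.
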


Theorem \ref{thm:f} is essentially a consequence of the compactness properties of the Seiberg--Witten equations. However there is a subtlety due to the chamber structure and wall crossing that requires some non-trivial arguments to overcome. From these two theorems it follows that (for each $n \ge 1$) we can put the Seiberg--Witten invariants together into a single homomorphism
\[
sw : \pi_n({\rm Diff}_0(X)) \to \bigoplus_{\mathfrak{s} | d(X,\mathfrak{s}) = -(n+1) } \! \! \! \! \! \! \! \! \! \! \! \! \mathbb{Z}, \quad x \mapsto \bigoplus_{\mathfrak{s}} sw_{\mathfrak{s}}(x).
\]

Now let $X$ be a $K3$ surface. Let
\[
\Delta = \{ \alpha \in H^2(X ; \mathbb{Z}) \; | \; \alpha^2 = -2 \}
\]
be the ``roots" of $X$. For each $\alpha \in \Delta$ we get a unique spin$^c$-structure $\mathfrak{s}_{\alpha}$ characterised by $c_1(\mathfrak{s}_\alpha) = 2\alpha$ (since $X$ is simply-connected and spin, the map $\mathfrak{s} \to c_1(\mathfrak{s})$ is a bijection between spin$^c$-structures and elements of $H^2(X ; \mathbb{Z})$ that are divisible by $2$). Then $d(X , \mathfrak{s}_{\alpha}) = -2$ and so we have a homomorphism $sw_{\alpha} : \pi_1({\rm Diff}(X)) \to \mathbb{Z}$.

Choose an element $v \in H^2(X ; \mathbb{R})$ such that $\langle v , \delta \rangle \neq 0$ for all $\delta \in \Delta$ and define $\Delta^{\pm} = \{ \delta \in \Delta \; | \; \pm \langle v , \delta \rangle > 0 \}$. Then
\[
\Delta = \Delta^+ \cup \Delta^-
\]
and $\delta \in \Delta^+$ if and only if $-\delta \in \Delta^-$. The reason for splitting up $\Delta$ this way is that the invariants $sw_{\mathfrak{\alpha}}$ and $sw_{\mathfrak{-\alpha}}$ are related to one another by the charge conjugation symmetry of the Seiberg--Witten equations. In fact, $sw_{\mathfrak{\alpha}} = -sw_{\mathfrak{-\alpha}}$ (see Proposition \ref{prop:conj}).

In \textsection \ref{sec:ein} we recall the construction of the moduli space $T_{Ein}$ of Einstein metrics on $X$, which may be regarded as an analogue of Teichm\"uller space for $K3$ surfaces. Over $T_{Ein}$ is a universal family $E_{Ein} \to T_{Ein}$. For each $\delta \in \Delta^+$, we construct a homotopy class of map $g_\delta : S^2 \to T_{Ein}$. Let $E_\delta \to S^2$ be the family over $S^2$ obtained by pulling back the universal family under $g_\delta$. Using the geometry of $T_{Ein}$, we compute the families Seiberg--Witten invariant of $E_\delta$. This gives the following result.

\begin{theorem}
Let $\alpha, \delta \in \Delta^+$. Then
\[
sw_{\alpha}( h_\delta ) = \begin{cases} 1 & \text{if } \alpha = \delta, \\ 0 & \text{otherwise}. \end{cases}
\]
\end{theorem}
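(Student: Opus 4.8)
The plan is to compute the families Seiberg--Witten invariant of $E_\delta \to S^2$ directly from the description of the family via the map $g_\delta : S^2 \to T_{Ein}$. Recall from \textsection\ref{sec:ein} that $T_{Ein}$ is identified (up to the covering subtleties discussed there) with the space of positive-definite oriented $3$-planes $P \subset H^2(X;\mathbb{R})$ with $P^\perp$ disjoint from $\Delta$, and that $g_\delta$ is, up to homotopy, a small $2$-sphere in this space linking the removed codimension-$3$ locus $\{P : P \subset \delta^\perp\}$ exactly once and linking no other such locus. Two features of this picture drive the proof. First, every fibre metric $g_P$ has $\mathcal{H}^+_{g_P}=P$ (the self-dual harmonic forms) and hence $\alpha^{+_{g_P}}\neq 0$, since $P^\perp$ contains no roots; so no $g_P$ lies on the $\mathfrak{s}_\alpha$-wall: the canonical chamber is defined pointwise, the unperturbed (or small generic) parametrised moduli space has no reducibles, and since its expected dimension is $\dim S^2 + d(X,\mathfrak{s}_\alpha)=0$ it is a compact oriented $0$-manifold whose signed count is $sw_\alpha(h_\delta)$. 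Second, each $g_P$ is Ricci-flat K\"ahler for the whole twistor $2$-sphere of compatible complex structures, so Witten's description of the Seiberg--Witten moduli space of a K\"ahler surface is available fibrewise.

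For $\alpha \neq \delta$ (hence $\alpha \neq \pm\delta$, since $\alpha,\delta \in \Delta^+$) I would prove $sw_\alpha(h_\delta)=0$ by a deformation argument. Since $g_\delta$ links only $\delta^\perp$, it is null-homotopic in the complement, inside the Grassmannian of positive $3$-planes, of the single locus $\{P : P \subset \alpha^\perp\}$; that is, $g_\delta$ bounds a $3$-disc of positive $3$-planes never meeting the $\mathfrak{s}_\alpha$-wall. Lifting this disc to a $3$-parameter family of metrics disjoint from the wall and choosing a compatible generic perturbation yields a compact parametrised cobordism, free of reducibles and within the canonical chamber, from the $0$-dimensional families moduli space of $E_\delta$ to the empty moduli space; hence $sw_\alpha(h_\delta)=0$. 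It is essential that the disc avoid $\alpha^\perp$ entirely, since a generic $3$-disc meets this codimension-$3$ locus in isolated points; only the vanishing of the linking class removes them. Conceptually this shows that $sw_\alpha$, restricted to the sphere classes coming from $T_{Ein}$, is $\pm$ the functional ``linking number with $\alpha^\perp$''.

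For $\alpha = \delta$ the plan is to localise near the single point at which $g_\delta$ would have to cross $\delta^\perp$. One builds a preferred family realising the linking sphere: fix a positive $3$-plane $P_0 \subset \delta^\perp$, tilt it toward $\mathbb{R}\delta$ by a parameter $\ell$ on a small $2$-sphere, and take the resulting $S^2$-family $\{P_\ell\}$ of hyperk\"ahler metrics; as the parameter sphere shrinks these collapse the smooth rational $(-2)$-curve in class $\pm\delta$ to an orbifold point. For each $\ell$, choose the complex structure in the twistor sphere of $g_{P_\ell}$ for which the orthogonal projection of $\delta$ to $P_\ell$ is the K\"ahler direction, so that $\delta$ (or $-\delta$) is represented by a smooth rational $(-2)$-curve; then Witten's analysis matches the Seiberg--Witten solution for $\mathfrak{s}_\delta$ in the canonical chamber with this effective divisor. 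A gluing/dimension count for the degenerating family shows the parametrised moduli space is a single transverse point, so $sw_\delta(h_\delta)=\pm1$; the sign is then fixed to $+1$ by the orientation conventions, in agreement with the charge-conjugation relation $sw_\delta=-sw_{-\delta}$ of Proposition~\ref{prop:conj}.

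The main obstacle is the diagonal case, and within it two points: (i) \emph{multiplicity one}, i.e.\ that the linking sphere contributes exactly one Seiberg--Witten solution and that the parametrised moduli space is cut out transversally near the orbifold degeneration; and (ii) the orientation and sign bookkeeping for the families moduli space, delicate enough that it is cleaner to pin the final sign using the charge-conjugation symmetry than to compute it head-on.
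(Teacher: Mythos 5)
Your overall intuition---that $g_\delta$ ``links'' the wall locus $A_\delta$ once and no other wall locus, and that this linking number is what the invariant measures---is conceptually consistent with the paper, but both halves of your argument have genuine gaps, and neither half is how the paper actually computes the invariant. The key point you are missing is that over the whole Einstein family, with zero perturbation, the moduli space $\mathcal{M}(E_\delta , \mathfrak{s}_\alpha , g , 0)$ is \emph{empty}: each $g_b$ is Ricci-flat, so has zero scalar curvature, and the Weitzenb\"ock estimate forces $\psi = 0$; and reducibles are excluded because $w(\alpha)$ is nowhere-vanishing. This is true for $\alpha = \delta$ as well. The value $sw_\alpha(h_\delta)$ is therefore \emph{not} a signed count of actual Seiberg--Witten solutions at the Einstein metrics; it is entirely the wall-crossing correction between the zero perturbation and the canonical (large-perturbation) chamber, namely $\deg(w(\alpha))$, which the paper computes explicitly to be $1$ when $\alpha=\delta$ and $0$ otherwise.

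Concretely, in your $\alpha \neq \delta$ case, the cobordism argument requires the family $E_\delta \to S^2$ to extend over a $3$-disc $D^3$ along with the family of metrics. But such an extension is exactly obstructed by the class $h_\delta \in \pi_1(Diff(X)_0)$, which is nontrivial---indeed proving its nontriviality is the point of the theorem. Choosing a $3$-disc of positive $3$-planes avoiding $\alpha^\perp$ and a $3$-parameter family of Riemannian metrics realizing those periods does not give you a smooth fibre bundle over $D^3$ restricting to $E_\delta$ on $\partial D^3$; it gives metrics on the fixed $X$. The part of this picture that survives is the statement that $w(\alpha) \colon S^2 \to H \setminus \{0\}$ is null-homotopic (degree zero), which is a statement about the wall-crossing term, not a cobordism of moduli spaces.

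In your $\alpha = \delta$ case the error is more serious: you are trying to produce an actual irreducible Seiberg--Witten solution from a $(-2)$-curve via Witten's K\"ahler-surface analysis, but for a K3 surface the only spin$^c$-structure with a nonzero basic class is the trivial one ($c_1 = 0$). For $\mathfrak{s}_\delta$ with $c_1(\mathfrak{s}_\delta) = 2\delta \neq 0$, the unperturbed moduli space at a Ricci-flat K\"ahler metric is empty, and in particular there is no ``single transverse point'' to count. Both of the difficulties you flag---multiplicity one and orientations---are thus artifacts of a moduli-count that never happens. The correct mechanism, which the paper carries out, is to use equation (\ref{equ:wcf}): since $SW(\mathfrak{s}_\alpha, 0) = 0$, one gets $sw_\alpha(h_\delta) = \deg(w(\alpha))$, and the explicit formula for the period map $f_\delta$ immediately gives $\deg(w(\delta)) = 1$ and $\deg(w(\alpha)) = 0$ for $\alpha \neq \delta$.
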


Our main theorem follows directly from this. Note that $\pi_1( {\rm Diff}_0(X) )$ is abelian since ${\rm Diff}_0(X)$ is a topological group.

A brief outline of the paper is as follows. In \textsection \ref{sec:fsw} we recall the construction of the families Seiberg--Witten invariants. We then show the invariants can be reformulated as maps $sw_{\mathfrak{s}} : \pi_n({\rm Diff}_0(X)) \to \mathbb{Z}$ and prove several properties of these invariants, in particular Theorems \ref{thm:hom} and \ref{thm:finite}. In \textsection \ref{sec:ein} we specialise to the case that $X$ is a $K3$ surface. We construct the family $E_{Ein} \to T_{Ein}$ over the Teichm\"uller space $T_{Ein}$ and use this to contruct classes $h_\delta \in \pi_1({\rm Diff}_0(X))$. We then compute the Seiberg--Witten invariants of these classes and our main theorem follows.

\section{The families Seiberg--Witten invariant revisited}\label{sec:fsw}

In this section we will recall the definition of the families Seiberg--Witten invariant. We will also show that the definition of the invariant can be extended to situations where wall-crossing phenomena are present. We show that under certain conditions a distinguished chamber exists, hence we can still obtain a well-defined invariant.

Our approach to the families Seiberg--Witten invariant follows \cite{liliu} but with some additional modifications as in \cite{bako1}. Let $X$ be a compact smooth oriented $4$-manifold and let $B$ be a compact smooth manifold. Suppose we have a smooth fibrewise oriented fibre bundle $\pi : E \to B$ whose fibres are diffeomorphic to $X$. Such a fibre bundle will be called a {\em smooth family over $B$ with fibres diffeomorphic to $X$}. We assume throughout that $B$ is connected. Choose a basepoint $p \in B$ and a diffeomorphism $X_p \cong X$, where $X_p = \pi^{-1}(p)$ denotes the fibre of $E$ over $p$. Then $\pi_1(B , p)$ acts by monodromy on the set of spin$^c$-structures on $X$. Suppose that $\mathfrak{s}$ is a monodromy invariant spin$^c$-structure on $X$. Then by monodromy invariance, $\mathfrak{s}$ can be uniquely extended to a continuously varying family of spin$^c$-structures $\tilde{\mathfrak{s}} = \{ \mathfrak{s}_b \}_{b \in B}$ on the fibres of $E$ such that $\tilde{\mathfrak{s}}|_{X_p} \cong \mathfrak{s}$ (here continuously varying means that the family admits local trivialisations for which the spin$^c$-structure is constant). Note that the existence of the continuous family $\tilde{\mathfrak{s}}$ is in general a {\em weaker} condition than requiring the existence of a spin$^c$-structure on the vertical tangent bundle $T(E/B) = Ker(\pi_*)$ (because a spin$^c$-structure on the vertical tangent bundle determines a continuously varying family of spin$^c$-structures by taking the fibrewise restriction, but not every continuously varying family of spin$^c$-structures arises this way). However, as explained in \cite{bar1}, \cite{bako1}, the existence of $\tilde{\mathfrak{s}}$ is sufficient to contruct a families Seiberg--Witten moduli space. 

Let
\[
d(X,\mathfrak{s}) = \frac{ c_1(\mathfrak{s})^2 - \sigma(X)}{4} -1 - b^+(X) + b_1(X)
\]
be the virtual dimension of the ordinary Seiberg--Witten moduli space of $X$. Let $g = \{ g_b \}_{b \in B}$ be a smoothly varying family of metrics on the fibres of $E$. Equivalently, $g$ is a metric on the vertical tangent bundle $T(E/B)$. Then we define $\mathcal{H}^+_g(X)$ to be the vector bundle on $B$ whose fibre over $b \in B$ is the space $H^+_{g_b}(X_b)$ of $g_b$-self-dual harmonic $2$-forms. By a families perturbation $\eta$ we mean a smoothly varying family $\eta = \{ \eta_b\}_{b \in B}$ of real $2$-forms on the fibres of $E$, such that $\eta_b$ is $g_b$-self-dual. Let $[\eta_b] \in H^+_{g_b}(X_b)$ denote the $L^2$-othogonal projection of $\eta_b$ to the space of self-dual harmonic forms (using the $L^2$-metric defined by $g_b$). The map $b \mapsto [\eta_b]$ defines a section of $\mathcal{H}^+_g(X)$, which we denote by $[\eta]$.

Recall that the Seiberg--Witten equations for $(X_b , \mathfrak{s}_b , g_b)$ with perturbation $\eta_b$ are:
\begin{align*}
D_A \psi &= 0, \\
F^+_A + i \eta_b &= \sigma(\psi),
\end{align*}
where $A$ is a spin$^c$-connection, $\psi$ is a positive spinor for the spin$^c$-structure $\mathfrak{s}_b$ and $\sigma(\psi)$ denotes the imaginary self-dual $2$-form corresponding to the trace-free part of $\psi^* \otimes \psi$ under Clifford multiplication. Let $w : B \to \mathcal{H}^+_g(X)$ be the section of $\mathcal{H}^+_g(X)$ sending $b$ to $2\pi ( c_1(\mathfrak{s}_b) )^{+_{g_b}}$, the orthogonal projection of $2 \pi c_1(\mathfrak{s}_b)$ to $H^+_{g_b}(X_b)$ using the $L^2$-metric defined by $g_b$. Then the $\eta_b$-perturbed Seiberg--Witten equations for $(X_b , \mathfrak{s}_b , g_b)$ admits reducible solutions if and only if $[\eta_b] = w$ (recall that a solution $(A , \psi)$ of the Seiberg--Witten equations is called reducible if $\psi= 0$ \cite{nic}). We refer to $w$ as the ``wall" and we say that the families perturbation $\eta$ does not lie on the wall if for all $b \in B$, we have $[\eta_b] \neq w_b$.

We define a {\em chamber} (of the families Seiberg--Witten equations) for $(E , \mathfrak{s})$ to be a connected component of the space of pairs $( g , \eta)$, where $g$ is a family of metrics and $\eta$ is a family of perturbations not lying on the wall. In general, there are obstructions to the existence of chambers. For instance, if $b^+(X) = 0$, then there does not exist a chamber. On the other hand, if $b^+(X) > {\rm dim}(B) + 1$, then there exists a unique chamber \cite{bako2}.

Let $\mathcal{C}$ be a chamber of $(E , \mathfrak{s} )$. Then for a sufficiently generic element $( g , \eta ) \in \mathcal{C}$, the moduli space $\mathcal{M}( E , \mathfrak{s} , g , \eta)$ of gauge equivalence classes of solutions to the Seiberg--Witten equations on the fibres of $E$ (with respect to the spin$^c$-structure $\tilde{\mathfrak{s}}$, metric $g$ and perturbation $\eta$) is a smooth, compact manifold of dimension $d(X,\mathfrak{s}_X) + {\rm dim}(B)$ (or is empty if this number is negative). See \cite{liliu} for more details concerning the construction of the families Seiberg--Witten moduli space. Recall that a homology orientation for $X$ is a choice of orientation of $H^+(X) \oplus H^1(X ; \mathbb{R})$ (here $H^+(X)$ denotes the space of harmonic self-dual $2$-forms with respect to some metric on $X$. It is straightforward to see that the homology orientations for different choices of metrics can be canonically identified with one another). We say that a homology orientation is monodromy invariant if it extends to a continuously varying orientation on the family $\{ H^+_{g_b}(X_b) \oplus H^1(X_b ; \mathbb{R}) \}_{b \in B}$.

Let $\pi : \mathcal{M}(E , \mathfrak{s} , g , \eta) \to B$ be the projection to $B$. A monodromy invariant homology orientation defines an orientation on $T_{\mathcal{M}(E , \mathfrak{s} , g , \eta)} \oplus \pi^*(TB)$, hence a Gysin homomorphism 
\[
\pi_* : H^j( \mathcal{M}(E , \mathfrak{s} , g , \eta) ; \mathbb{Z} ) \to H^{j-d(X , \mathfrak{s})}( B ; \mathbb{Z}).
\]

We define the {\em families Seberg--Witten invariant} $SW(E , \mathfrak{s} , \mathcal{C} , \mathfrak{o}) \in H^{-d(X,\mathfrak{s})}(B ; \mathbb{Z})$ of $E$ with respect to the monodromy invariant spin$^c$-structure $\mathfrak{s}$, the chamber $\mathcal{C}$ and monodromy invariant homology orientation $\mathfrak{o}$ to be
\[
SW(E , \mathfrak{s} , \mathcal{C} , \mathfrak{o} ) = \pi_*(1) \in H^{-d(X,\mathfrak{s})}(B ; \mathbb{Z}).
\]
The fact that $SW(E , \mathfrak{s} , \mathcal{C} , \mathfrak{o} )$ depends only on the chamber $\mathcal{C}$ and not on the particular choice of pair $(g , \eta) \in \mathcal{C}$ follows by much the same argument as in the unparametrised case. A generic path between pairs $(g,\eta), (g' , \eta')$ determines a cobordism (relative $B$) of the moduli spaces $\mathcal{M}(E , \mathfrak{s} , g , \eta)$ and $ \mathcal{M}(E , \mathfrak{s} , g' , \eta')$.

Let $\pi : E \to B$ be a smooth family over $B$ with fibres diffeomorphic to $X$. Let $\mathcal{H}^2(X)$ denote the local system over $B$ whose fibre over $b \in B$ is $\mathcal{H}^2(X)_b = H^2(X_b ; \mathbb{R})$. Assume that $B$ is simply-connected and that $b^+(X) > 1$. Choose a basepoint $p \in B$. Then parallel translation defines a trivialisation
\[
\tau : \mathcal{H}^2(X) \to B \times H^2(X_p ; \mathbb{R}).
\]
Let $H \subseteq H^2(X_p ; \mathbb{R})$ be a maximal positive definite subspace with respect to the intersection form and let $H^\perp$ denote the orthogonal complement of $H$ (with respect to the intersection form). This defines a decomposition $H^2(X_p ; \mathbb{R}) \cong H \oplus H^\perp$. Let $\rho_H : H^2(X_p ; \mathbb{R}) \to H$ denote the projection to the first factor.

Choose a smoothly varying family of metrics $g = \{g_b\}$ and let $\mathcal{H}^+_g(X)$ be defined as before. Let $\iota_g : \mathcal{H}^+_g(X) \to \mathcal{H}^2(X)$ be the inclusion. Let $pr_B : B \times H^2(X_p ; \mathbb{R}) \to B$ be the projection to $B$. Then the composition 
\[
\varphi_g = (pr_B \times \rho_H) \circ \tau \circ \iota_g : \mathcal{H}^+_g(X) \to B \times H
\]
is an isomorphism of vector bundles. This follows since $\tau( \iota_g( \mathcal{H}^+_g(X)))$ is a positive definite subbundle of $B \times H^2(X_p ; \mathbb{R})$, so meets the negative definite subbundle $B \times H^\perp$ in the zero section.

Let $\mathfrak{s}$ be a spin$^c$-structure on $X$. Since $B$ is simply-connected, $\mathfrak{s}$ is automatically monodromy invariant and so continuously extends to the fibres of $E$.

Let $w : B \to \mathcal{H}^+_g(X)$ be the wall with respect to the spin$^c$-structure $\mathfrak{s}$. Then $\varphi_g(w)$ is a section of the trivial bundle $B \times H$. Let 
\[
R_{g} = \sup_{b \in B} || \varphi_g( w_b ) ||_H
\]
where $|| \; ||_H$ is the norm on $H$ induced by the restriction to $H$ of the intersection form on $H^2(X_p ; \mathbb{R})$. Since $B$ is compact, $R_g$ is finite. Since $b^+(X) > 1$, $H$ is a non-zero vector space and hence there exist elements of arbitrarily large norm. Let $v$ be any element of $H$ with $||v||_H > R_g$. Then the constant section $b \mapsto b \times v$ is disjoint from $\varphi_g(w)$. Therefore, the section $v_g = \varphi_g^{-1}(v)$ of $\mathcal{H}^+_g(X)$ is disjoint from the wall $w$ and hence defines a chamber, depending only on $g$ and $v$ which we will denote by $\mathcal{C}(g,v)$.

\begin{lemma}\label{lem:chamb}
The chamber $\mathcal{C}(g,v)$ does not depend on the choice of the pair $(g,v)$.
\end{lemma}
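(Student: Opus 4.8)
The plan is to show that the space of pairs $(g,v)$ used to define the chambers $\mathcal{C}(g,v)$ is itself connected, and that moving continuously through this space produces a continuous path of metric-perturbation pairs not meeting the wall, so that the chamber is unchanged. Concretely, fix two pairs $(g_0, v_0)$ and $(g_1, v_1)$ with $v_i \in H$ and $\|v_i\|_H > R_{g_i}$. First I would choose a smooth path $g_t$ of families of fibrewise metrics joining $g_0$ to $g_1$; since $B$ is compact and $[0,1]$ is compact, the quantity $R = \sup_{t \in [0,1]} R_{g_t}$ is finite. The key point is that the construction of $\varphi_{g}$ goes through for the path: we get a continuously varying isomorphism $\varphi_{g_t} : \mathcal{H}^+_{g_t}(X) \to B \times H$, using the same fixed decomposition $H^2(X_p;\mathbb{R}) = H \oplus H^\perp$ and the same parallel transport trivialisation $\tau$ (which does not depend on the metric at all).

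Next I would handle the perturbation vector. Pick any $v \in H$ with $\|v\|_H > R$; then for every $t \in [0,1]$ the constant section $b \mapsto (b,v)$ is disjoint from $\varphi_{g_t}(w_t)$ (where $w_t$ is the wall for $\mathfrak{s}$ with respect to $g_t$), simply because $\|v\|_H > R \ge R_{g_t} = \sup_b \|\varphi_{g_t}(w_{t,b})\|_H$. Hence $v_{g_t} = \varphi_{g_t}^{-1}(v)$ is a path of sections of $\mathcal{H}^+_{g_t}(X)$ disjoint from the wall, i.e. a path $(g_t, v_{g_t})$ inside the space of pairs not lying on the wall, joining $(g_0, v_{g_0})$ to $(g_1, v_{g_1})$. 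This shows $\mathcal{C}(g_0, v) = \mathcal{C}(g_1, v)$. It remains to connect $(g_i, v)$ to $(g_i, v_i)$ for $i = 0,1$: within the single bundle $\mathcal{H}^+_{g_i}(X)$, both $v$ and $v_i$ have $H$-norm exceeding $R_{g_i}$, and since $H$ minus the closed ball of radius $R_{g_i}$ is connected (here $\dim H = b^+(X) > 1$ is used — this is exactly why the hypothesis $b^+(X)>1$ appears), we may join $v$ to $v_i$ by a path $v^{(s)}$ in $H$ with $\|v^{(s)}\|_H > R_{g_i}$ throughout; then $\varphi_{g_i}^{-1}(v^{(s)})$ stays disjoint from the wall. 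Concatenating the three paths gives a path in the no-wall locus from $(g_0, v_{g_0})$ to $(g_1, v_{g_1})$, proving they lie in the same chamber.

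The main obstacle, such as it is, is bookkeeping rather than conceptual: one must be a little careful that $\varphi_{g_t}$ really is defined and continuous along the whole path, i.e. that $\tau(\iota_{g_t}(\mathcal{H}^+_{g_t}(X)))$ remains a positive-definite subbundle transverse to $B \times H^\perp$ for all $t$ — but this is automatic since self-dual harmonic forms are always positive-definite for the intersection form, exactly as in the construction of $\varphi_g$ already given in the text. One should also note that the chamber $\mathcal{C}(g,v)$ a priori depends on the choice of maximal positive subspace $H \subseteq H^2(X_p;\mathbb{R})$, but the space of such subspaces is itself connected (it is a homogeneous space for $O(b^+,b^-)$, or one can simply observe any two are joined by a path of maximal positive subspaces), so a parallel argument removes that dependence as well. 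Thus $\mathcal{C}(g,v)$ depends on no choices, and we denote it simply $\mathcal{C}(E,\mathfrak{s})$ or $\mathcal{C}$.
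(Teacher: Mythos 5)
Your argument is correct and follows essentially the same route as the paper: fix a path of metric families, use compactness of $B\times[0,1]$ to obtain a uniform bound $R$ so that a single $v$ with $\|v\|_H>R$ works for all $t$, and then use connectedness of $\{x\in H:\|x\|_H>R_{g_i}\}$ (which requires $b^+(X)>1$) to connect to the given $v_i$. The paper organizes the two steps in the opposite order (first independence of $v$ for fixed $g$, then existence of a common $v$ for two metrics), but the content, the use of $\varphi_g$, and the role of $b^+(X)>1$ are identical; your added remark about independence of the choice of $H$ is also made in the paper, though in the definition following the lemma rather than inside the proof.
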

\begin{proof}
First we show that for fixed $g$, the chamber $\mathcal{C}(g,v)$ does not depend on the choice of $v$. Let $R_{g} = \sup_{b \in B} || \varphi_g( w_b ) ||_H$ be defined as before and let $v,v'$ be any two elements of $H$ with $||v||_H, ||v'||_H > R_g$. The space $\{ x \in H \; | \; ||x||_H > R_g \}$ is homotopy equivalent to a sphere of dimension $b^+(X)-1$ and is therefore connected, since we are assuming that $b^+(X) > 1$. Therefore we can find a continuous path $\{v_t\}_{t \in [0,1]}$ in $\{ x \in H \; | \; ||x||_H > R_g \}$ joining $v$ to $v'$. It follows that $(g, (v_t)_g ) \in \mathcal{C}(g,v)$ for all $t \in [0,1]$. Hence $(g,(v')_g) \in \mathcal{C}(g,v)$ and $\mathcal{C}(g,v) = \mathcal{C}(g,v')$.

Now let $g,g'$ be two different families of metrics. We will show that there exists a $v \in H$ for which $\mathcal{C}(g,v) = \mathcal{C}(g',v)$. Together with the above shown independence of $\mathcal{C}(g,v)$ on $v$, this will show that $\mathcal{C}(g,v)$ does not depend on the choice of pair $(g,v)$.

Choose a continuous path $\{g_t\}_{t \in [0,1]}$ of families of metrics from $g$ to $g'$. Let 
\[
R = \sup_{b \in B, t \in [0,1]} || \varphi_{g_t}( w_b ) ||_H.
\] 
Compactness of $B \times [0,1]$ implies that $R$ is finite. Now choose $v \in H$ such that $||v||_H > R$. Then $|| v ||_H > \sup_{b \in B} || \varphi_{g_t}(w_b) ||_H$ for each $t \in [0,1]$. It follows that the pair $(g_t ,  v)$ defines the same chamber for all $t \in [0,1]$. Hence $\mathcal{C}(g,v) = \mathcal{C}(g',v)$.
\end{proof}

\begin{definition}
Let $\pi : E \to B$ be a smooth family over $B$ with fibres diffeomorphic to $X$ and let $\mathfrak{s}$ be a spin$^c$-structure on $X$. Assume that $B$ is simply-connected and that $b^+(X) > 1$. For any pair $(g,v)$ with $v \in \{ x \in H \; | \; ||x||_H > R_g \}$, we let $\mathcal{C}_0(E,\mathfrak{s})$ denote the chamber containing $(g,v_g)$. By Lemma \ref{lem:chamb}, we see that $\mathcal{C}_0(E,\mathfrak{s})$ does not depend on the choice of the pair $(g,v)$. Furthermore, it is clear that $\mathcal{C}_0(E,\mathfrak{s})$ does not depend on the choice of maximal positive definite subspace $H \subseteq H^2(X ; \mathbb{R})$, because the space of all such subspaces is connected (as it can be identified with the connected homogeneous space $O(3,19)/O(3) \times O(19)$). We call $\mathcal{C}_0(E,\mathfrak{s})$ the {\em canonical chamber} for $(E , \mathfrak{s})$.
\end{definition}

\begin{definition}
Let $\pi : E \to B$ be a smooth family over $B$ with fibres diffeomorphic to $X$. Assume that $B$ is simply-connected and that $b^+(X) > 1$. Let $\mathfrak{s}$ be a spin$^c$-structure on $X$ and let $\mathfrak{o}$ be a homology orientation for $X$. Then we define the {\em (canonical) families Seiberg--Witten invariant} $SW(E , \mathfrak{s} , \mathfrak{o})$ of $(E , \mathfrak{s} , \mathfrak{o})$ to be the families Seiberg--Witten invariant of $(E , \mathfrak{s} , \mathfrak{o})$ defined using the canonical chamber:
\[
SW( E , \mathfrak{s} , \mathfrak{o} ) = SW( E , \mathfrak{s} , \mathcal{C}_0(E,\mathfrak{s}) , \mathfrak{o} ) \in H^{-d(X,\mathfrak{s})}(B ; \mathbb{Z}).
\]
\end{definition}

Let $X$ be a compact, oriented, smooth $4$-manifold with $b^+(X) > 1$. Let ${\rm Diff}(X)$ be the group of orientation preserving diffeomorphisms of $X$ with the $\mathcal{C}^\infty$-topology and let ${\rm Diff}_0(X)$ be the identity co
mponent. Let $n >0$ be a positive integer. Consider an element $f \in \pi_n( {\rm Diff}_0(X) )$. Using the clutching construction, $f$ defines a topological fibre bundle $E_f \to S^{n+1}$ over $S^{n+1}$ with fibres homeomorphic to $X$ and structure group ${\rm Diff}_0(X)$. From the main theorem of \cite{mw}, it follows that $E_f$ can be made into a smooth fibre bundle with fibres diffeomorphic to $X$ in a unique way. Hence $E_f$ only depends on the homotopy class of $f$ up to isomorphism as a smooth fibre bundle with structure group ${\rm Diff}_0(X)$.

We describe the clutching construction in detail in order to fix certain orientation conventions. Regard $S^{n+1}$ as the unit sphere in $\mathbb{R}^{n+2}$. The standard orientation on $\mathbb{R}^{n+2}$ induces an orientation on $S^{n+1}$ by the outer normal first convention. Let $S^{n+1}_{\pm} = \{ (x^1 , \dots , x^{n+2}) \in S^{n+1}  | \, \pm x^{n+2} > 0 \}$ be the two hemispheres. Then $S^{n+1} = S^{n+1}_+ \cup S^{n+1}_-$ and $S^{n+1}_+ \cap S^{n+1}_- = S^n$. Given a map $f : S^n \to {\rm Diff}_0(X)$, the fibre bundle $E_f \to S^{n+1}$ is given by taking the trivial bundles $S^{n+1}_+ \times X$, $S^{n+1}_- \times X$ and identifying $(s , x ) \in S^n \times X \subset S^{n+1}_- \times X$ with $(s ,(f(s))(x) ) \in S^n \times X \subset S^{n+1}_+ \times X$.

Let $\mathfrak{s}$ be a spin$^c$-structure on $X$ and $\mathfrak{o}$ a homology orientation. Since $n > 0$, $S^{n+1}$ is simply-connected and $b^+(X) > 1$, the families Seiberg--Witten invariant
\[
SW( E_f , \mathfrak{s} , \mathfrak{o} ) \in H^{-d(X,\mathfrak{s})}( S^{n+1} ; \mathbb{Z})
\]
is defined. If $d(X,\mathfrak{s}) = -(n+1)$, then we can evaluate $SW(E_f , \mathfrak{s} , \mathfrak{o})$ against the fundamental class of $S^{n+1}$ to obtain an integer invariant.

\begin{definition}
Let $X$ be a compact, oriented, smooth $4$-manifold with $b^+(X) > 1$. Let $\mathfrak{s}$ be a spin$^c$-structure such that $d(X,\mathfrak{s}) = -(n+1)$ for some $n \ge 0$. We define
\[
sw_{\mathfrak{s}} : \pi_n( {\rm Diff}_0(X) ) \to \mathbb{Z}
\]
by setting
\[
sw_{\mathfrak{s}}( f ) = \int_{S^{n+1}} SW(E_f , \mathfrak{s} , \mathfrak{o}),
\]
where we have chosen a homology orientation $\mathfrak{o}$ and we have oriented $S^{n+1}$ according to the convention described above. We have omitted from our notation the dependence of $sw_{\mathfrak{s}}$ on the choice of homology orientation. Changing the homology orientation has the effect of changing $sw_{\mathfrak{s}}$ by an overall sign.
\end{definition}

\begin{remark}
The invariant $sw_{\mathfrak{s}}(f) \in \mathbb{Z}$ can be interpreted as follows. Choose a generic pair $(g,\eta) \in \mathcal{C}_0(E_f , \mathfrak{s})$. Then the moduli space $\mathcal{M}(E , \mathfrak{s} , g , \eta)$ is a compact, oriented $0$-manifold and $sw_{\mathfrak{s}}(f)$ is simply the number of points of $\mathcal{M}(E_f , \mathfrak{s} , g , \eta)$, counted with sign.
\end{remark}

\begin{theorem}\label{thm:hom}
Let $X$ be a compact, oriented, smooth $4$-manifold with $b^+(X) > 1$. Then for each spin$^c$-structure with $d(X,\mathfrak{s}) = -(n+1) \le -2$, the map 
\[
sw_{\mathfrak{s}} : \pi_n({\rm Diff}_0(X)) \to \mathbb{Z}
\]
is a group homomorphism.
\end{theorem}
\begin{proof}
Let $f \in \pi_n({\rm Diff}_0(X))$ and let $E_f \to S^{n+1}$ be the corresponding family built from the clutching construction. Choose a generic pair $(g , \eta) \in \mathcal{C}_0(E_f , \mathfrak{s})$. The moduli space $\mathcal{M}(E , \mathfrak{s} , g , \eta)$ is a finite set of points. Let $p_1 , \dots , p_m \in S^{n+1}$ be the finitely many points over which $\mathcal{M}(E_f , \mathfrak{s} , g , \eta)$ lies. Choose a point $p \in S^{n+1}$ and an open disc $D \subset S^{n+1}$ around $p$ such that $D$ is disjoint from $p_1, \dots , p_m$. Furthermore, we can assume that $D$ is contained in the interior of $S^{n+1}_-$. Using cutoff functions it is possible to construct a smooth map $\psi : S^{n+1} \to S^{n+1}$ such that $\psi(D) = \{ p \}$ and $\psi : S^{n+1} \setminus D \to S^{n+1} \setminus \{p\}$ is a diffeomorphism. Now consider the pullback $\psi^*(E_f)$ of $E_f$ under $\psi$. The conditions on $\psi$ ensures that it has degree $1$ as a map of $S^{n+1}$ to itself. Hence $\psi$ is homotopic to the identity. It follows that $\psi^*(E_f)$ is isomorphic to $E_f$ as topological fibre bundles over $S^{n+1}$ with structure group ${\rm Diff}(X)$. The main theorem of \cite{mw} then implies that $\psi^*(E_f) = E_{f \circ \psi}$ and $E_f$ are isomorphic as smooth fibre bundles. The pullback $(\psi^*(g) , \psi^*(\eta))$ is a generic pair in $\mathcal{C}_0(\psi^*(E_f) , \mathfrak{s})$ and the moduli space $\mathcal{M}( \psi^*(E_f) , \mathfrak{s} , \psi^*(g) , \psi^*(\eta) )$ is obviously obtained by pulling back $\mathcal{M}(E_f , \mathfrak{s} , g , \eta)$ by $\psi$. Let $X_p$ denote the fibre of $E_f$ over $p$ and fix a diffeomorphism $X_p \cong X$. Since $\psi$ takes the constant value $p$ on $D$, the restriction of $\psi^*(E_f)$ to $D$ is the constant family $\psi^*(E_f)|_D \cong D \times X_p \cong D \times X$. Under this trivialisation of $\psi^*(E_f)|_D$ we have that $\psi^*(g) , \psi^*(\eta)$ get sent to the constant pair $(g_p , \eta_p)$.

Now let $f'$ be another element of $\pi_n({\rm Diff}_0(X))$ and let $E_{f'} \to S^{n+1}$ be the family corresponding to $f'$. Choose a generic pair $(g',\eta') \in \mathcal{C}_0( E_{f'} , \mathfrak{s})$. Let $r : S^{n+1} \to S^{n+1}$ be the orientation reversing map given by $r(x^1 , \dots , x^{n+2}) = (x^1, \dots , x^{n+1} , -x^{n+2})$. Observe that $r$ exchanges the two hemispheres $S^{n+1}_{\pm}$. The moduli space $\mathcal{M}( E_{f'} , \mathfrak{s} , g' , \eta')$ is a finite set of points $p'_1 , \dots , p'_{m'}$, hence we can further assume that $D$ was chosen to be disjoint from $r(p'_1) , \dots , r(p'_{m'})$. Equivalently, $r(D)$ is disjoint from $p'_1, \dots , p'_{m'}$. Let $\psi' = r \circ \psi \circ r$. We then obtain the pullback family ${\psi'}^*(E_{f'})$ with generic pair $( {\psi'}^*(g') , {\psi'}^*(\eta') )$. Moreover, we have a trivialisation of ${\psi'}^*(E_{f'})|_{r(D)}$ in which ${\psi'}^*(g')$, ${\psi'}^*(\eta')$ are sent to the constant pair $(g'_{p'} , \eta'_{p'})$, where $p' = r(p)$.

Let $D_0 \subset D$ be a smaller open disc around $p$ whose closure is contained in $D$. Attach $S^{n+1} \setminus D_0$ and $S^{n+1} \setminus r(D_0)$ to each other using a neck $[0,1] \times \partial D_0 \times [0,1]$ (note that $\partial D_0$ is diffeomorphic to $S^n$). More precisely, consider the resulting space 
\[
(S^{n+1} \setminus D_0) \cup_{\partial D_0} ([0,1] \times \partial D_0 ) \cup_{\partial D_0} (S^{n+1} \setminus r(D_0))
\]
where we identify $( 0 , y ) \in [0,1] \times \partial D_0$ with $y \in \partial( S^{n+1} \setminus D_0) = \partial D_0$ and we identify $(1 , y) \in [0,1] \times \partial D_0$ with $r(y) \in \partial( S^{n+1} \setminus r(D_0)) = \partial (r(D_0))$. Because $r$ is orientation reversing, this construction is easily seen to be the oriented connected sum of two copies of $S^{n+1}$. Of course the resulting space is just another copy of $S^{n+1}$. Using the trivialisations $\psi^*(E_f)|_{D} \cong D \times X$ and ${\psi'}^*(E_{f'})|_{r(D)} \cong r(D) \times X$, we can attach $\psi^*(E_f)|_{S^{n+1} \setminus D_0}$ to ${\psi'}^*(E_{f'})|_{S^{n+1} \setminus r(D_0)}$ by taking a constant family $([0,1] \times \partial D_0) \times X$ along the neck. 

Let $E$ denote the resulting family. Since $\psi^*(E_f)$ and ${\psi'}^*(E_{f'})$ are isomorphic to $E_f$ and $E_{f'}$ and since the neck $[0,1] \times \partial D_0$ connects the lower hemisphere in $\psi^*(E_f)$ with the upper hemisphere in ${\psi'}^*(E_{f'})$, it is clear that $E$ is isomorphic to the family obtained by applying the clutching construction to $f + f'$, where $+$ denotes the group operation on $\pi_n({\rm Diff}_0(X))$.

Next, since $d(X , \mathfrak{s}) = -(n+1) \le -2$, it follows that the moduli space of solutions to the Seiberg--Witten equations for a $1$-parameter family with fibres $(X , \mathfrak{s})$ has expected dimension $d(X,\mathfrak{s})+1 = -n < 0$. Therefore, for a generic path $(g_t , \eta_t)$ from $(g(p) , \eta(p))$ to $(g'(p') , \eta'(p'))$, there are no solutions to the Seiberg--Witten equations for $(X , \mathfrak{s} , g_t , \eta_t)$. Now we define a pair $(\tilde{g} , \tilde{\eta})$ for the family $E$ as follows. Restricted to $\psi^*(E_f)|_{S^{n+1} \setminus D_0}$, we take the pair to be $( \psi^*(g) , \psi^*(\eta))$. Restricted to ${\psi'}^*(E_{f'})|_{S^{n+1} \setminus r(D_0)}$, we take the pair to be $( {\psi'}^*(g') , {\psi'}^*(\eta'))$. Restricted to the constant family on the neck $[0,1] \times S^n $, we take the pair to be $( g_t , \eta_t)$, where $t \in [0,1]$ is the coordinate for the $[0,1]$ factor of the neck. Since there are no solutions to the Seiberg--Witten equations for $(g_t , \eta_t)$, it is clear that the moduli space for $( E , \mathfrak{s} , \tilde{g} , \tilde{\eta})$ is just the disjoint union 
\[
\mathcal{M}( \psi^*(E_f) , \mathfrak{s} , \psi^*(g) , \psi^*(\eta) ) \cup \mathcal{M}( {\psi'}^*(E_{f'}) , \mathfrak{s} , {\psi'}^*(g') , {\psi'}^*(\eta') )
\]
of the corresponding moduli spaces for $\psi^*(E_f)$ and ${\psi'}^*(E_{f'})$. Since there are no solutions to the Seiberg--Witten equations of the glued family along the neck, and since the metrics and perturbations $(g,\eta)$ and $(g' , \eta')$ of the original families were generic, it follows that $(\tilde{g} , \tilde{\eta})$ is also generic. It then follows that
\[
\int_{S^{n+1}} SW( E , \mathfrak{s} , \tilde{g} , \tilde{\eta} ) = sw_{\mathfrak{s}}(f) + sw_{\mathfrak{s}}(f').
\]
To complete the proof, it remains to show that $\int_{S^{n+1}} SW( E , \mathfrak{s} , \tilde{g} , \tilde{\eta} ) = sw_{\mathfrak{s}}(f+f')$. Since $E$ is isomorphic to the family obtained from applying the clutching construction to $f+f'$, we just need to show that the pair $(\tilde{g} , \tilde{\eta})$ lies in the canonical chamber $\in \mathcal{C}_0( E , \mathfrak{s})$.

Let $\mathcal{H}^+_{\psi^*(g)}(X)$, $\mathcal{H}^+_{{\psi'}^*(g')}(X)$ and $\mathcal{H}^+_{\tilde{g}}(X)$ be the bundles of harmonic self-dual $2$-forms for the families of metrics $\psi^*(g), {\psi'}^*(g')$, and $\tilde{g}$. Then from the construction of $\tilde{g}$, we have that $\mathcal{H}^+_{\tilde{g}}(X)$ is obtained by attaching $\mathcal{H}^+_{\psi^*(g)}(X)|_{S^{n+1} \setminus D_0}$ and $\mathcal{H}^+_{{\psi'}^*(g')}(X)|_{S^{n+1} \setminus r(D_0)}$ to the bundle $\mathcal{H}^+_{g_t}(X)$ over $[0,1] \times S^n$ whose fibre over $(t,x) \in [0,1] \times S^n$ is the space of $g_t$-self-dual harmonic $2$-forms. 

Let $\mathcal{H}^2_E(X)$ denote the local system whose fibres are the degree $2$ cohomology of the fibres of $E$ (the subscript $E$ is just to remind us which family $\mathcal{H}^2_E(X)$ comes from). Similarly define the local systems $\mathcal{H}^2_{\psi^*(E_f)}(X), \mathcal{H}^2_{{\psi'}^*(E_{f'})}(X)$. Then $\mathcal{H}^2_E(X)$ is obtained by attaching $\mathcal{H}^2_{\psi^*(E_f)}(X)|_{S^{n+1} \setminus D_0}$ and $\mathcal{H}^2_{{\psi'}^*(E_{f'})}(X)|_{S^{n+1} \setminus r(D_0)}$ to the constant local system over $[0,1] \times S^n$ with fibre $H^2( X_p ; \mathbb{R})$. Choose a maximal positive definite subspace $H \subseteq H^2(X_p ; \mathbb{R})$ and let $\rho_H : H^2(X_p ; \mathbb{R}) \to H$ be the projection. Taking the composition of inclusion and projection to $H$, we obtain isomorphisms
\[
\varphi_{\tilde{g}} : \mathcal{H}^+_{\tilde{g}}(X) \to B \times H, \quad \varphi_{\psi^*(g)} : \mathcal{H}^+_{\psi^*(g)}(X) \to B \times H, \quad \varphi_{{\psi'}^*(g')} : \mathcal{H}^+_{{\psi'}^*(g')}(X) \to B \times H.
\]
Similarly, we obtain an isomorphism $\varphi_{g_t} : \mathcal{H}^+_{g_t}(X) \to B \times H$. It is clear that the restriction of $\varphi_{\tilde{g}}$ to $S^{n+1} \setminus D_0$ agrees with $\varphi_{\psi^*(g)}$, the restriction of $\varphi_{\tilde{g}}$ to $S^{n+1} \setminus r(D_0)$ agrees with $\varphi_{{\psi'}^*(g')}$ and the restriction of $\varphi_{\tilde{g}}$ to $[0,1] \times S^n$ agrees with $\varphi_{g_t}$.

Let $w : S^{n+1} \to \mathcal{H}^+_{\tilde{g}}(X)$ denote the wall for the family $E$ and set
\[
R = \sup_{b \in S^{n+1}} || \varphi_{\tilde{g}}( w_b ) ||_H.
\]
Recall that we have assumed $(g,\eta) \in \mathcal{C}_0( E_f , \mathfrak{s})$. Fix an element $v \in H$ such that $|| v ||_H > R$. Choose an $\epsilon > 0$ such that each $u$ in the open ball $B(v,\epsilon) = \{ u \in H \; | \; || u-v||_H < \epsilon \}$ has $||u||_H > R$ ($\epsilon = (||v||_H - R)/2$ would suffice). We will assume that $\eta$ is chosen with $\varphi_{g_b}[\eta]_b \in B(v,\epsilon)$ for all $b \in S^{n+1}$. Then we also have $\varphi_{\psi^*(g)}[\psi^*(\eta)]_b \in B(v,\epsilon)$ for all $b \in S^{n+1}$. Similarly, we can assume that $\eta'$ was chosen so that $\varphi_{g'_b}[\eta']_b \in B(v,\epsilon)$ for all $b \in S^{n+1}$ and hence $\varphi_{{\psi'}^*(g')}[{\psi'}^*(\eta')]_b \in B(v,\epsilon)$ for all $b \in S^{n+1}$ as well. Lastly, we can assume that the generic path $(g_t, \eta_t)$ joining $(g_p,\eta_p)$ to $(g'_p , \eta'_p)$ satisfies $\varphi_{g_t}( [\eta_t] ) \in B(v,\epsilon)$ for all $t \in [0,1]$. It follows that $\varphi_{\tilde{g}} [\tilde{\eta}]_b \in B(v,\epsilon)$ for all $b \in S^{n+1}$. Therefore we can find a homotopy from $\varphi_{\tilde{g}}[\tilde{\eta}]$ to the constant section $v$ and hence $(\tilde{g} , \tilde{\eta})$ lies in $\mathcal{C}_0(E , \mathfrak{s})$.
\end{proof}

Let ${\rm Diff}(X)$ act on itself by conjugation. Since the identity element is fixed, this gives an action of ${\rm Diff}(X)$ on $\pi_n( {\rm Diff}_0(X) )$. We write this action as $(f , h ) \mapsto f h f^{-1}$.

\begin{proposition}\label{prop:diff}
Let $X$ be a compact, oriented, smooth $4$-manifold with $b^+(X) > 1$ and let $f : X \to X$ be an orientation preserving diffeomorphism. Then for each spin$^c$-structure with $d(X,\mathfrak{s})=-(n+1) \le -2$ we have
\[
sw_{\mathfrak{s}}( f h f^{-1}) = sw_{f^*(\mathfrak{s})}(h).
\]
\end{proposition}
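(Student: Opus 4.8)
The plan is to realise conjugation by $f$ geometrically as a fibrewise diffeomorphism between the associated clutching bundles, and then to invoke the naturality of the whole families Seiberg--Witten package under bundle isomorphisms. Recall that $E_h \to S^{n+1}$ is obtained by gluing $D^{n+1}_+ \times X$ to $D^{n+1}_- \times X$ along their common boundary $S^n$ via $(x,p)_+ \sim (x, h(x)(p))_-$, and that $E_{fhf^{-1}}$ is obtained in the same way but gluing via $(x,p)_+ \sim (x, f h(x) f^{-1}(p))_-$. First I would verify that
\[
\Phi : E_{fhf^{-1}} \to E_h, \qquad (x,p)_\pm \mapsto (x, f^{-1}(p))_\pm
\]
is well defined: on the boundary $S^n$ both sides of the gluing relation for $E_{fhf^{-1}}$ are sent to the single point $(x, h(x)f^{-1}(p))_- \in E_h$, since $f^{-1} f h(x) f^{-1}(p) = h(x) f^{-1}(p)$. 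Thus $\Phi$ is an isomorphism of smooth fibre bundles over $S^{n+1}$ which, in the evident product trivialisations over $D^{n+1}_\pm$, restricts on each fibre to the diffeomorphism $f^{-1} : X \to X$. This identity --- that conjugating the clutching function by $f$ amounts to applying $f$ fibrewise --- is the conceptual content of the proposition; everything else is bookkeeping.

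I would then transport the Seiberg--Witten data across $\Phi$. Since $f$ is orientation preserving, $f^*$ preserves the intersection form and $b_1$, so $d(X, f^*\mathfrak{s}) = d(X,\mathfrak{s}) = -(n+1)$ and $sw_{f^*\mathfrak{s}}$ is defined. Choose a family of metrics $g$ and a families perturbation $\eta$ for $E_h$ lying in the canonical chamber $\mathcal{C}_0(E_h, f^*\mathfrak{s})$, and pull them back to $(\Phi^*g, \Phi^*\eta)$ on $E_{fhf^{-1}}$ (note $d\Phi$ preserves vertical tangent bundles, so this makes sense). Because $\Phi$ is a bundle isomorphism it identifies $\mathcal{H}^+_{\Phi^*g}(X)$ for $E_{fhf^{-1}}$ with $\mathcal{H}^+_g(X)$ for $E_h$, the local systems $\mathcal{H}^2$, the walls, and hence the entire space of pairs $(g,\eta)$ not on the wall; the canonical chamber, being characterised by the large-perturbation condition of Lemma \ref{lem:chamb}, is carried to the canonical chamber. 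Since $\Phi$ restricts to $f^{-1}$ on the reference fibre, pulling back the extension of the spin$^c$-structure $f^*\mathfrak{s}$ gives the extension of $(f^{-1})^*(f^*\mathfrak{s}) = \mathfrak{s}$, so $(\Phi^*g, \Phi^*\eta)$ lies in $\mathcal{C}_0(E_{fhf^{-1}}, \mathfrak{s})$; likewise $\Phi$ carries a homology orientation $\mathfrak{o}$ on the reference fibre of $E_h$ to the homology orientation $(f^{-1})^*\mathfrak{o}$ on the reference fibre of $E_{fhf^{-1}}$.

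Finally, pullback along $\Phi$ identifies the moduli space $\mathcal{M}(E_h, f^*\mathfrak{s}, g, \eta)$ with $\mathcal{M}(E_{fhf^{-1}}, \mathfrak{s}, \Phi^*g, \Phi^*\eta)$ as oriented $0$-manifolds, the orientations being those induced by $\mathfrak{o}$ and by $(f^{-1})^*\mathfrak{o}$ respectively. The signed point counts therefore coincide, which is precisely $sw_{\mathfrak{s}}(fhf^{-1}) = sw_{f^*\mathfrak{s}}(h)$, provided the suppressed homology orientations on the two sides are taken to correspond under $f^*$; any other choice alters the claimed equality only by a global sign that depends on $f$ but not on $h$, consistent with the stated convention that this dependence is suppressed. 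I expect the one place requiring genuine care is this last comparison of orientations --- checking that a monodromy invariant homology orientation on $E_{fhf^{-1}}$ and its image under $\Phi$ on $E_h$ really do induce $\Phi$-compatible orientations on the families moduli spaces --- while the clutching identity is a one-line check and the matching of chambers is a formal consequence of the naturality of the setup of \textsection\ref{sec:fsw}.
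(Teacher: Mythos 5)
Your proposal is correct and follows essentially the same route as the paper: both realise conjugation by $f$ as a fibrewise diffeomorphism between the clutching bundles (you use $\Phi : E_{fhf^{-1}} \to E_h$, applying $f^{-1}$ fibrewise, while the paper uses the inverse map $\tilde f : E_h \to E_{fhf^{-1}}$, applying $f$ fibrewise) and then transport the metric, perturbation, canonical chamber, spin$^c$ extension, and moduli space across the isomorphism. Your explicit discussion of the homology orientation --- that the stated equality presumes the suppressed orientations on the two sides correspond under $f^*$, and that otherwise a global sign depending only on $f$ appears --- is a careful point that the paper's proof leaves silent, consistent with its stated convention of leaving the orientation dependence of $sw_{\mathfrak{s}}$ implicit.
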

\begin{proof}
Let $D_+, D_-$ be two copies of the unit disc in $\mathbb{R}^{n+1}$. Attaching $D_+$ and $D_-$ along their boundary gives $S^{n+1}$. Let $h \in \pi_n( {\rm Diff}_0(X))$. The family $E_h$ is obtained by attaching $D_+ \times X$ to $D_- \times X$ using the attaching map $\partial D_- \times X \to \partial D_+ \times X$, $(b,x) \mapsto a_h(b,x) = (b , (h(b))(x))$. Similarly $E_{fhf^{-1}}$ is constructed using $(b,x) \mapsto a_{fhf^{-1}}(b,x) =  (b , (f h(b)f^{-1})(x))$. Consider the maps $\tilde{f}_{\pm} : D_{\pm} \times X \to D_{\pm} \times X$ given by $\tilde{f}_{\pm}(b,x) = (b , f(x))$. One finds that
\[
a_{f h f^{-1}} \circ \tilde{f}_- = \tilde{f}_+ \circ a_h.
\]
This says that the maps $\tilde{f}_\pm$ glue together to define a map $\tilde{f} : E_h \to E_{f h f^{-1}}$. The map $\tilde{f}$ is an isomorphism of smooth families over $S^{n+1}$. Let $\tilde{s}$ be the continuous extension of $\tilde{s}$ to a family of spin$^c$-structures on the fibres of $E_{f h f^{-1}}$. Then clearly $\tilde{f}^*( \tilde{\mathfrak{s}})$ is a continuous extension of $f^*(\mathfrak{s})$ to a family of spin$^c$-structures on the fibres of $E_h$. Let $(g,\eta)$ be a generic pair for $(E_{fhf^{-1}} , \mathfrak{s})$ lying in the canonical chamber. Then $( \tilde{f}^*(g) , \tilde{f}^*(\eta))$ is a generic pair for $(E_h , f^*(\mathfrak{s})$ lying in the canonical chamber. Clearly $\tilde{f}$ induces an isomorphic between the corresponding moduli spaces for $(E_{fhf^{-1}} , \mathfrak{s} , g , \eta)$ and $(E_h , f^*(\mathfrak{s}) , \tilde{f}^*(g) , \tilde{f}^*(\eta) )$. Hence $sw_{\mathfrak{s}}( f h f^{-1}) = sw_{f^*(\mathfrak{s})}(h)$.
\end{proof}

Recall that there is an involution $\mathfrak{s} \mapsto \overline{\mathfrak{s}}$ on the set of spin$^c$-structure which we refer to as charge conjugation \cite[Page 51]{nic}. Recall that $c_1(\overline{\mathfrak{s}}) = -c_1( \mathfrak{s})$.

\begin{proposition}\label{prop:conj}
Let $X$ be a compact, oriented, smooth $4$-manifold with $b^+(X) > 1$ and let $\mathfrak{s}$ be a spin$^c$-structure with $d(X,\mathfrak{s})=-(n+1) \le -2$. Then
\[
sw_{\overline{\mathfrak{s}}} = (-1)^{\frac{b_+(X)-b_1(X)-n}{2}} sw_{\mathfrak{s}}.
\]
\end{proposition}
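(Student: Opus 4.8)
The plan is to exploit the charge conjugation symmetry of the Seiberg--Witten equations together with a careful analysis of its effect on orientations. On a single fibre $(X_b , \mathfrak{s}_b , g_b)$, the map sending a configuration $(A , \psi)$ to its complex conjugate $(\overline{A} , \overline{\psi})$, where $\overline{A}$ is the induced connection on $\det(\overline{\mathfrak{s}_b}) = \det(\mathfrak{s}_b)^{-1}$, is a bijection from the solutions of the $\eta_b$-perturbed equations for $\mathfrak{s}_b$ onto the solutions of the $(-\eta_b)$-perturbed equations for $\overline{\mathfrak{s}_b}$, compatible with the gauge action. Performing this fibrewise over $E_f$, for any family of metrics $g$ and family of perturbations $\eta$ one obtains a diffeomorphism
\[
\mathcal{M}(E_f , \mathfrak{s} , g , \eta) \ \cong\ \mathcal{M}(E_f , \overline{\mathfrak{s}} , g , -\eta).
\]

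First I would verify that charge conjugation matches the canonical chambers. The wall for $\overline{\mathfrak{s}}$ is $\overline{w}_b = 2\pi (c_1(\overline{\mathfrak{s}_b}))^{+_{g_b}} = -w_b$, where $w$ is the wall for $\mathfrak{s}$, so in the notation preceding Lemma~\ref{lem:chamb} one has $\varphi_g(\overline{w}) = -\varphi_g(w)$ and the same finite constant $R_g$. If $(g , \eta) \in \mathcal{C}_0(E_f , \mathfrak{s})$, then $\varphi_g([\eta])$ is homotopic, through sections avoiding $w$, to a constant section $v$ with $\|v\|_H > R_g$; multiplying by $-1$ shows $\varphi_g([-\eta])$ is homotopic through sections avoiding $-w$ to the constant section $-v$, which also has norm $> R_g$. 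Since $b^+(X) > 1$, the canonical chamber is independent of the choice of large-norm element, so $(g , -\eta) \in \mathcal{C}_0(E_f , \overline{\mathfrak{s}})$. Hence the displayed diffeomorphism identifies the $0$-manifolds whose signed point counts are $sw_{\mathfrak{s}}(f)$ and $sw_{\overline{\mathfrak{s}}}(f)$.

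The remaining, and main, task is to compare orientations. The orientation of the families moduli space is built, at each solution, from the determinant line of the deformation operator $Q = D_A \oplus (d^+ + d^*)$: the Dirac summand is complex linear, and its index bundle over the family carries the orientation induced by its complex structure, while the orientation of the $(d^+ + d^*)$-summand, together with the base direction $TB$, is fixed by the monodromy invariant homology orientation $\mathfrak{o}$. Charge conjugation acts as the identity on the data entering the second summand and on $\mathfrak{o}$, but conjugate linearly on the spinors. Since complex conjugation of a rank $c$ complex vector bundle reverses the orientation of its underlying real bundle by $(-1)^c$, the displayed diffeomorphism changes the orientation by $(-1)^c$, where
\[
c = \mathrm{ind}_{\mathbb{C}} D_A = \frac{c_1(\mathfrak{s})^2 - \sigma(X)}{8}.
\]
Using $d(X,\mathfrak{s}) = -(n+1)$, which gives $\tfrac{c_1(\mathfrak{s})^2 - \sigma(X)}{4} = b^+(X) - b_1(X) - n$, one gets $c = \tfrac{b^+(X) - b_1(X) - n}{2}$, an integer because $c_1(\mathfrak{s})^2 \equiv \sigma(X) \pmod 8$. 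Taking signed counts then yields $sw_{\overline{\mathfrak{s}}}(f) = (-1)^{c}\, sw_{\mathfrak{s}}(f)$ for every $f$, which is the assertion.

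The step I expect to be the main obstacle is the orientation bookkeeping: one must fix the precise ordering convention for the determinant line of the families Seiberg--Witten moduli space (the Dirac determinant line relative to the $H^0$, $H^1$ and $H^+$ factors and to $TB$) and check that complex conjugation of the spinors affects only the Dirac determinant line, and does so by the single global sign $(-1)^c$ consistently over all of $B = S^{n+1}$ and over the whole configuration space, rather than by fibre-dependent signs. The other ingredients --- the fibrewise charge conjugation symmetry and the matching of canonical chambers --- are routine and parallel the unparametrised case.
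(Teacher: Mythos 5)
Your overall strategy is the same as the paper's: conjugation gives a fibrewise bijection of moduli spaces, the canonical chambers correspond under $\eta\mapsto-\eta$, and the remaining work is to compute the orientation change. Your chamber analysis is fine and matches the paper's. The orientation analysis is where there is a genuine gap, and it is the very step you flag as the likely obstacle.

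You assert that ``charge conjugation acts as the identity on the data entering the second summand,'' i.e.\ on the $d^++d^*$/Atiyah--Hitchin--Singer part, so that only the Dirac summand contributes to the sign change. This is not correct. Conjugation sends $A\mapsto\bar A$; in the affine parametrisation $\mathrm{Conn}(\det\mathfrak{s})=A_0+i\Omega^1$ this is $a\mapsto-a$, and it also inverts gauge transformations ($u\mapsto u^{-1}$, hence $i\Omega^0\ni ic\mapsto -ic$) and negates curvatures ($F^+\mapsto -F^+$ on $i\Omega^+$). So conjugation acts as $-1$ on every term of the form part of the deformation complex, and on its determinant line it is multiplication by $(-1)^{\chi_{\mathrm{AHS}}}=(-1)^{1-b_1(X)+b^+(X)}$. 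This factor must be multiplied by the $(-1)^{d_{\mathfrak s}}$ coming from the anti-linearity on the Dirac summand, giving the orientation change $(-1)^{d_{\mathfrak s}+1-b_1(X)+b^+(X)}$, which is exactly the formula the paper uses. Your formula $(-1)^{d_{\mathfrak s}}$ is missing the AHS contribution.

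The reason you nevertheless land on the stated sign is an arithmetic coincidence. Since $c_1(\mathfrak s)^2\equiv\sigma(X)\pmod 8$, the existence of $\mathfrak s$ with $d(X,\mathfrak s)=-(n+1)$ forces $b^+(X)-b_1(X)\equiv n\pmod 2$, so the factor you dropped equals $(-1)^{1-b_1+b^+}=(-1)^{1+n}$, which is $+1$ precisely when $n$ is odd --- in particular for the $K3$ application $n=1$. So your answer agrees with the proposition in the case that actually matters, but your derivation of the sign is not sound, and the way you obtain agreement is not the paper's argument. To repair it, replace the claim that conjugation is the identity on the form part by the $-1$ action described above, carry the resulting factor $(-1)^{1-b_1+b^+}$, and then combine it with $(-1)^{d_{\mathfrak s}}$ using $d_{\mathfrak s}=\tfrac{b^+(X)-b_1(X)-n}{2}$.
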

\begin{proof}
Recall that charge conjugation gives rise to a bijection from the Seiberg--Witten equations for $(X , \mathfrak{s} , g , \eta)$ to the Seiberg--Witten equations for $(X , \overline{\mathfrak{s}} , g , -\eta)$. Fix a homology orientation for $X$, giving orientations on $\mathcal{M}(X , \mathfrak{s} , g , \eta)$ and $\mathcal{M}( X , \overline{\mathfrak{s}} , g , -\eta)$. The charge conjugation map is orientation preserving or reversing according to the sign of $(-1)^{d_\mathfrak{s} + 1 - b_1(X) + b^+(X)}$ where 
\[
d_\mathfrak{s} = \frac{ c_1(\mathfrak{s})^2 - \sigma(X) }{8},
\]
see \cite[Proposition 2.2.26]{nic}. Similarly, charge conjugation gives rise to a bijection of families moduli spaces. By a straighforward extension of \cite[Proposition 2.2.26]{nic} to the families setting, we see that the charge conjugation isomorphism changes the orientation of the families moduli space by the same factor $(-1)^{d_\mathfrak{s}+1-b_1(X) + b^+(X) }$. Moreover, it is clear that if $(g , \eta)$ is in the canonical chamber for $(E , \mathfrak{s})$, then $(g , -\eta)$ is in the canonical chamber for $(E , \overline{\mathfrak{s}})$. Hence
\[
sw_{\overline{\mathfrak{s}}} = (-1)^{u} sw_{\mathfrak{s}}
\]
where
\[
u = d_\mathfrak{s} + 1 - b_1(X) + b^+(X).
\]
But since
\[
-n-1 = d(X , \mathfrak{s}) = 2 d_\mathfrak{s} + 1 - b_1(X) + b^+(X)
\]
we see that
\[
d_\mathfrak{s} = \frac{-n-2+b_1(X)-b^+(X)}{2}
\]
and hence
\[
u = \frac{-n-2+b_1(X)-b^+(X)}{2} + 1-b_1(X)+b^+(X) = \frac{b^+(X) - b_1(X) -n }{2}.
\]
\end{proof}

\begin{theorem}\label{thm:finite}
Let $X$ be a compact, oriented, smooth $4$-manifold such that $b^+(X) > 1$ and $b_1(X) = 0$. For a given $f \in \pi_n({\rm Diff}_0(X))$, we have that $sw_{\mathfrak{s}}(f)$ is non-zero for only finitely many spin$^c$-structures with $d(X,\mathfrak{s}) = -(n+1)$.
\end{theorem}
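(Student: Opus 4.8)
The plan is to prove the statement by a uniform a priori estimate. First note that since $b_1(X)=0$ and $d(X,\mathfrak{s})=-(n+1)$, the number $c_1(\mathfrak{s})^2 = \sigma(X)+4b^+(X)-4n =: k$ is a fixed integer for all the spin$^c$-structures under consideration; consequently it is enough to bound $|c_1(\mathfrak{s})|$ in some fixed norm on $H^2(X;\mathbb{R})$, for then $c_1(\mathfrak{s})$ ranges over a finite subset of the lattice $H^2(X;\mathbb{Z})$ (modulo torsion), and each value of $c_1$ is realised by only finitely many spin$^c$-structures. So I want to produce a constant $C$, depending only on $f$ (equivalently on the family $E_f$), such that $sw_{\mathfrak{s}}(f)\ne 0$ and $d(X,\mathfrak{s})=-(n+1)$ force $|c_1(\mathfrak{s})|\le C$.

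The crucial ingredient is a good choice of auxiliary family of metrics. Form $E_f\to S^{n+1}$ by clutching. Since $Diff(X)_0$ acts trivially on cohomology, the flat bundle $\mathcal{H}^2(X)\to S^{n+1}$ is canonically trivial and each $c_1(\mathfrak{s}_b)$ equals the constant class $c_1(\mathfrak{s})$ under this trivialisation. Fix a maximal positive definite subspace $H\subset H^2(X;\mathbb{R})$ and choose a family of metrics $g=\{g_b\}$ on $E_f$ with $\mathcal{H}^+_{g_b}=H$ for every $b$. For such a $g$ and any $\mathfrak{s}$, the wall $b\mapsto 2\pi c_1(\mathfrak{s}_b)^{+_{g_b}}$ becomes, under $\varphi_g$, the \emph{constant} section equal to $w_0:=2\pi\rho_H c_1(\mathfrak{s})\in H$. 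Because $b^+(X)\ge 2$, the set $H\setminus\{w_0\}$ is connected, so all constant sections $\equiv v$ with $v\ne w_0$ lie in one chamber; in particular the canonical chamber $\mathcal{C}_0(E_f,\mathfrak{s})$ contains the constant section $\equiv v$ for $\|v\|_H$ as small as we wish. I would take $\eta^{\mathfrak{s}}$ to be a generic small perturbation of the fibrewise $g_b$-harmonic form representing $v$, so that $\sup_{b}\|\eta^{\mathfrak{s}}_b\|_{C^0}$ is bounded by a constant depending only on $g$, independently of $\mathfrak{s}$.

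With this in hand the estimate is routine. If $sw_{\mathfrak{s}}(f)\ne 0$ then $\mathcal{M}(E_f,\mathfrak{s},g,\eta^{\mathfrak{s}})$ is non-empty, so there is a Seiberg--Witten solution $(A,\psi)$ over some $b_0\in S^{n+1}$. The Weitzenböck formula together with the maximum principle gives $\|\psi\|_{C^0}^2\le \|s^-_{g_{b_0}}\|_{C^0}+c\,\|\eta^{\mathfrak{s}}_{b_0}\|_{C^0}$, and then the curvature equation $F_A^+=\sigma(\psi)-i\eta^{\mathfrak{s}}_{b_0}$ yields $\|F_A^+\|_{L^2,g_{b_0}}\le C_1$ with $C_1$ depending only on $g$ (using compactness of $S^{n+1}$). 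Since $\|F_A\|_{L^2}^2=2\|F_A^+\|_{L^2}^2+4\pi^2 k\le 2\|F_A^+\|_{L^2}^2+4\pi^2|k|$, the harmonic representative of $c_1(\mathfrak{s})$ satisfies $\|c_1(\mathfrak{s})\|_{L^2,g_{b_0}}^2\le \tfrac{1}{2\pi^2}C_1^2+|k|$. As the $L^2$-norms $\|\cdot\|_{L^2,g_b}$ on $H^2(X;\mathbb{R})$ depend continuously on $b$ over the compact base, they are uniformly comparable to a fixed norm, and we obtain the desired uniform bound on $|c_1(\mathfrak{s})|$.

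The main obstacle is the construction of the family of metrics $g$ with fibrewise period point constantly equal to $H$. This rests on two facts: that every maximal positive definite subspace of $H^2(X;\mathbb{R})$ is the space of self-dual harmonic forms of some metric on $X$ (surjectivity of the period map for metrics), and that the choice can be made continuously over $S^{n+1}$ — which holds once one knows the period map $\mathrm{Met}(X)\to \mathrm{Gr}^+(H^2(X;\mathbb{R}))$ is a fibration, its fibre then being weakly contractible as the fibre of a fibration between contractible spaces. Taming the wall in this way is precisely the ``subtlety due to the chamber structure and wall crossing''; the remainder is the standard $C^0$-bound for the Seiberg--Witten equations combined with the fact that $c_1(\mathfrak{s})^2$ is pinned down by $d(X,\mathfrak{s})=-(n+1)$.
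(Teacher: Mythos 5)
Your estimate argument (the Weitzenb\"ock bound, the $L^2$-bound on $F_A^+$, and the observation that $c_1(\mathfrak{s})^2$ is pinned down, so that a uniform bound on some norm of $c_1(\mathfrak{s})$ leaves only finitely many lattice points) is fine and is in fact close in spirit to the first part of the paper's proof, which also begins with a uniform a~priori bound for SW solutions independent of $\mathfrak{s}$. The difficulty is not there.

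The genuine gap is the step you yourself flag as the ``main obstacle'': producing a family of metrics $g=\{g_b\}$ on $E_f$ with $H^+_{g_b}\equiv H$ constant. As you set it up, this requires (i) that the chosen maximal positive definite subspace $H$ lies in the image of the period map $\mathrm{Met}(X)\to \mathrm{Gr}^+(H^2(X;\mathbb{R}))$, and (ii) that the period map is a Serre fibration with weakly contractible fibre, so a constant map of $S^{n+1}$ into the Grassmannian lifts. Neither of these is a theorem for a general compact oriented $4$-manifold with $b^+>1$, $b_1=0$: the period map is a submersion, hence open, but its surjectivity is a well-known open problem, and the fibration/contractible-fibre statement is even stronger. (These facts do hold for $K3$ by special hyper-K\"ahler geometry, but the theorem is stated for arbitrary $X$ with $b^+>1$, $b_1=0$, and indeed the author uses it for that general class.) Without this, the wall is a non-constant section of $H$ whose image grows with $|c_1(\mathfrak{s})|$, and the canonical chamber is only known to contain large constant perturbations, so your uniform $C^0$ control on $\eta^{\mathfrak{s}}$ evaporates.

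The paper's proof is structured precisely to avoid needing to ``tame'' the wall: it fixes an arbitrary smooth family of metrics, uses the zero perturbation (which is always admissible and gives the uniform compactness you want), and then pays for the fact that $(g,0)$ may not lie in the canonical chamber by invoking the families wall-crossing formula. The discrepancy between the canonical-chamber invariant and $\int SW(\mathfrak{s},0)$ is the degree of the map $w(\mathfrak{s})/\|w(\mathfrak{s})\|_H\colon S^{n+1}\to S(H)$, which is only relevant when $b^+(X)=n+2$; a connectedness argument on the quadric $\{c^2=N\}$ minus a compact set shows this degree is a constant $\kappa$ for all but finitely many valid $\mathfrak{s}$, and an orientation-reversing isometry of $H^2(X;\mathbb{R})$ preserving $H$ forces $\kappa=0$. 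If you want to salvage your approach you would need to replace the constant-period metric family by this wall-crossing bookkeeping; as written, the argument depends on unestablished facts about the period map.
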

\begin{proof}

Let $g$ be a metric on $X$ and consider the Seiberg--Witten equations on $X$ with respect to the metric $g$ and zero perturbation. Recall that the a priori estimates for solutions $(A , \psi)$ of the Seiberg--Witten equations (after gauge fixing) imply bounds on the norms of $A, \psi$ in a suitable Sobolev space \cite[\textsection 2.2]{nic}. A bound $M(g)$ can be chosen which depends continuously on $g$ and the topology of $X$, but does not depend on the spin$^c$-structure $\mathfrak{s}$. Hence for a smooth family $E \to B$ over a compact base $B$, we obtain compactness of the families Seiberg--Witten moduli space, taken over {\em all} spin$^c$-structures, with zero perturbation and a fixed family $g = \{g_b\}$ of metrics. It follows that the families moduli space $\mathcal{M}( E , \mathfrak{s} , g , 0)$ is non-empty for only finitely many spin$^c$-structures, say, $\mathfrak{s}_1, \dots , \mathfrak{s}_m$. For any other spin$^c$-structure, $\mathfrak{s}$, the moduli space $\mathcal{M}(E , \mathfrak{s} , g , 0 )$ is empty. Hence $\eta = 0$ is a generic perturbation for $(E , \mathfrak{s})$ and $(g , 0)$ defines a chamber for $(E , \mathfrak{s})$.

Now let $f \in \pi_n({\rm Diff}_0(X))$ and take $E \to B$ to be the family $E_f \to S^{n+1}$ associated to $f$. If $b^+(X) > n+1$, then there is only one chamber and hence we deduce that $sw_{\mathfrak{s}}(f) = 0$ for all but finitely many spin$^c$-structures.

If $b^+(X) \le n+2$, then we have to consider chambers. Fix a family of metrics $g$. Then we have shown that for all but finitely many spin$^c$ structures $\mathfrak{s}$, $(g , 0)$ is a generic perturbation and $SW(E_f , \mathfrak{s} , g , 0 ) = 0$. However, $(g,0)$ might not lie in the canonical chamber. Hence we need to consider contributions to the Seiberg--Witten invariant from wall crossing.

For the rest of the proof, the family $E_f$ and metric $g$ will be fixed. To simplify notation we will write $SW( \mathfrak{s} , \eta)$ instead of $SW(E_f , \mathfrak{s} , g , \eta)$, whenever $\eta$ is a generic perturbation for $(E , \mathfrak{s} , g)$. Fix a maximal positive definite subspace $H$ of $H^2(X ; \mathbb{R})$. Let $\varphi : \mathcal{H}^+_g(X) \to H$ be the map which is the inclusion of $\mathcal{H}^+_g(X)$ into $H^2(X ; \mathbb{R})$, followed by projection to $H$. For each spin$^c$-structure $\mathfrak{s}$, let $w(\mathfrak{s})$ be the section of $H$ which sends $b \in B$ to $\varphi( 2\pi c_1(\mathfrak{s})^{+_{g_b}} )$. Given a perturbation $\eta$, let $w(\eta)$ be the section of $H$ given by $b \mapsto \varphi( [\eta]_b)$. If $w(\eta)$ and $w(\mathfrak{s})$ are disjoint, then $(g,\eta)$ defines a chamber for $(E , \mathfrak{s})$ and hence the Seiberg--Witten invariant $SW(\mathfrak{s},\eta)$ is defined.

Let $S(H)$ denote the unit sphere in $H$, which has dimension $b^+(X)-1$. If $w(\eta)$ and $w(\mathfrak{s})$ are disjoint, then $\phi = (w(\eta) - w(\mathfrak{s}))/|| w(\eta) - w(\mathfrak{s}) ||_H$ defines a section of $S(H)$. The wall crossing formula for the families Seiberg--Witten invariants \cite[Corollary 5.5]{bako2} (see also \cite{liliu}) adapted to the present setting states that
\begin{equation}\label{equ:obs1}
SW(\mathfrak{s} , \eta_1) - SW(\mathfrak{s} , \eta_2) = Obs ( \phi , \psi) s_{1-d}(D)
\end{equation}
where $\phi, \psi : B \to S(H)$ are the sections of $S(H)$ given by
\[
\phi = \frac{w(\eta_1) - w(\mathfrak{s})}{||w(\eta_1) - w(\mathfrak{s})||_H}, \quad \psi = \frac{w(\eta_2) - w(\mathfrak{s})}{||w(\eta_2) - w(\mathfrak{s})||_H},
\]
$s_k(D)$ is the $k$-th Segre class of $D$, the families index of the family of spin$^c$ Dirac operators determined by $(E , \mathfrak{s} , g)$, $d = (c_1(\mathfrak{s}^2 - \sigma(X))/8$ and $Obs(\phi , \psi) \in H^{b^+(X)-1}( B ; \mathbb{Z})$ is the primary difference class of $\phi,\psi$ (\cite[\textsection 36]{ste}), the obstruction to constructing a homotopy of two maps $\phi, \psi : B \to S(H)$ over the $b^+(X)-1$ skeleton of $B$. In our case $B = S^{n+1}$ and so $H^{b^+(X)-1}(B ; \mathbb{Z}) = H^{b^+(X)-1}( S^{n+1} ; \mathbb{Z})$ is zero unless $b^+(X) = n+2$.

If $b^+(X) \neq n+2$ then the primary obstruction vanishes implying that the value of $SW(\mathfrak{s} , \eta)$ does not depend on the choice of chamber. But we have already seen that for all but finitely many spin$^c$ structures $\mathfrak{s}$, $SW(\mathfrak{s} , 0) = 0$. Hence $sw_{\mathfrak{s}} = 0$ for all but finitely many $\mathfrak{s}$.

It remains to consider the case $b^+(X) = n+2$. In this case we have $-n-1 = d(X , \mathfrak{s}) = 2d -n-3$ and hence $d=1$. But $s_0(D) = 1$ and so Equation (\ref{equ:obs1}) reduces to $SW(\mathfrak{s} , \eta_1) - SW(\mathfrak{s} , \eta_2) = Obs ( \phi , \psi)$. Furthermore, the primary obstruction is valued in $H^{n+1}( S^{n+1} ; \mathbb{Z}) \cong \mathbb{Z}$. Let $\nu \in H^{n+1}( S^{n+1} ; \mathbb{Z})$ be the generator corresponding to our chosen orientation on $S^{n+1}$. Then from \cite[Proposition 5.7]{bako2}, we have
\[
Obs(\phi , \psi ) = (-1)^{b^+(X)-1}( \phi^*(\nu) - \psi^*(\nu) ).
\]
Integrating over $S^{n+1}$, the wall crossing formula reduces to
\[
\int_{S^{n+1}} SW(\mathfrak{s} , \eta_1) - \int_{S^{n+1}} SW(\mathfrak{s} , \eta_2) = (-1)^{b^+(X)-1}( {\rm deg}(-\phi_{\mathfrak{s}, \eta_1}) - {\rm deg}(-\phi_{\mathfrak{s},\eta_2}) ),
\]
where we define
\[
\phi_{\mathfrak{s} , \eta} = \frac{ w(\mathfrak{s}) - w(\eta) }{ ||w(\mathfrak{s}) -w(\eta)||_H }
\]
for any perturbation $\eta$ such that $w(\eta)$ and $w(\mathfrak{s})$ are disjoint. 

Noting that ${\rm deg}( -\phi ) = (-1)^{b^+(X)} {\rm deg}(\phi)$, the wall crossing formula can be re-written as
\[
\int_{S^{n+1}} SW(\mathfrak{s} , \eta_1) - \int_{S^{n+1}} SW(\mathfrak{s} , \eta_2) = -{\rm deg}(\phi_{\mathfrak{s}, \eta_1}) + {\rm deg}(\phi_{\mathfrak{s},\eta_2}).
\]

Now let us take $\eta_1 = \eta$ to be arbitrary and choose $\eta_2$ such that $w(\eta_2) = v$ is a constant such that $|| v ||_H > \sup_{B} || w(\mathfrak{s})||_H$. Then $(g , \eta_2)$ lies in the canonical chamber. Now since $|| w(\eta_2) ||_H > || w(\mathfrak{s}) ||_H$ for all $b \in B$, we obtain a homotopy
\[
t \mapsto \frac{ (1-t) w(\mathfrak{s}) - w(\eta_2) }{|| (1-t)w(\mathfrak{s}) - w(\eta_2) ||_H}, \quad t \in [0,1]
\]
from $\phi_{\mathfrak{s} , \eta_2}$ to the constant $-v/||v||_H$. It follows that ${\rm deg}( \phi_{\mathfrak{s} , \eta_2} ) = 0$ and therefore
\[
\int_{S^{n+1}} SW(\mathfrak{s} , \eta) - \int_{S^{n+1}} SW(\mathfrak{s} , \eta_2) = -{\rm deg}(\phi_{\mathfrak{s}, \eta}).
\]
But $(g , \eta_2)$ lies in the canonical chamber, so $\int_{S^{n+1}} SW(\mathfrak{s} , \eta_2) = sw_{\mathfrak{s}}(f)$. Hence the above formula reduces to
\begin{equation}\label{equ:wcf}
\int_{S^{n+1}} SW(\mathfrak{s} , \eta) = sw_{\mathfrak{s}}(f) - {\rm deg}( \phi_{\mathfrak{s} , \eta} ).
\end{equation}
Now we set $\eta=0$. Then for all but finitely many $\mathfrak{s}$, we have that $w(\mathfrak{s})$ is non-vanishing and that $SW(\mathfrak{s} , 0) = 0$. Hence for all but finitely many $\mathfrak{s}$, we find
\[
sw_{\mathfrak{s}}(f) = {\rm deg}( w(\mathfrak{s})/||w(\mathfrak{s})||_H ).
\]

To finish the proof, it remains to show that when $b^+(X) = n+2$, there are only finitely many $\mathfrak{s}$ such that $d(X,\mathfrak{s}) = -(n+1)$, $w(\mathfrak{s})$ is non-vanishing and $w(\mathfrak{s})/||w(\mathfrak{s})||_H : S^{n+1} \to S(H)$ has non-zero degree. For convenience, let us say that a spin$^c$-structure $\mathfrak{s}$ is {\em valid} if $d(X,\mathfrak{s}) = -(n+1)$ and $w(\mathfrak{s})$ is non-vanishing and let us write ${\rm deg}( w(\mathfrak{s}) )$ for the degree of $w(\mathfrak{s})/||w(\mathfrak{s})||_H$. Then we need to show that ${\rm deg}( w(\mathfrak{s}) ) = 0$ for all but finitely many valid $\mathfrak{s}$.

We first show that there is a constant $\kappa$ such that ${\rm deg}( w(\mathfrak{s}) ) = \kappa$ for all but finitely many valid $\mathfrak{s}$. We will then argue that $\kappa = 0$.

Note that if $b^+(X) = n+2$ and $d(X,\mathfrak{s}) = -(n+1)$, then
\[
\frac{ c_1(\mathfrak{s})^2 - \sigma(X) }{4} - n - 3 = -n-1
\]
(where we used $b_1(X)=0$) and hence if $\mathfrak{s}$ is valid, then
\[
c_1(\mathfrak{s})^2 = \sigma(X) + 8.
\]
We set $N = \sigma(X)+8$. If $N \ge 0$, then for every non-zero $c \in H^2(X ; \mathbb{R})$ such that $c^2 = N$, the orthogonal projection $c^{+_g}$ of $c$ to $H^+_g(X)$ is non-zero. This is because
\[
(c^{+_g})^2 \ge (c^{+_g})^2 - |(c^{-_g})^2| = c^2 = N \ge 0
\]
and equality $(c^{+_g})^2 = 0$ can only occur if $c=0$. So if $N \ge 0$, then every non-zero $c \in H^2( X ; \mathbb{R})$ defines a non-zero map $w(c) : S^{n+1} \to H$ by taking $w(c) = \varphi( 2 \pi c^{+_g})$. The set $\{ c \in H^2( X \; \mathbb{R} ) \; | \; c \neq 0, \; c^2 = N \}$ is clearly connected if $N>0$, since $b^+(X)  > 1$. Also if $N=0$, then $\sigma(X) = -8$ and so $b^+(X), b^-(X) > 1$. It follows that $\{ c \in H^2( X \; \mathbb{R}) \; | \; c \neq 0, \; c^2 = 0\}$ is connected. Therefore the degree of $w(c)/||w(c)||_H$ is a constant $\kappa$. Now there are only finitely many spin$^c$ structures $\mathfrak{s}$ for which $c_1(\mathfrak{s}) = 0$, hence $c_1(\mathfrak{s}) \in \{c \in H^2( X ; \mathbb{R} ) \; | \; c \neq N \}$ for all but finitely many valid $\mathfrak{s}$. So ${\rm deg}( w(\mathfrak{s}) ) = \kappa$ for all but finitely many valid $\mathfrak{s}$.

Now we suppose $N < 0$. So $\sigma(X) < -8$ and in particular $b^-(X) > b^+(X) > 1$. Let us define
\[
C_N = \{ c \in H^2( X ; \mathbb{R}) \; | \; c^2 = N \}.
\]
Then $C_N$ is homotopy equivalent to a sphere of dimension $b^-(X)-1$. For each $b \in B$, consider
\[
S_b = \{ c \in C_N \; | \; c^{+_{g_b}} = 0 \}.
\]
The condition $c^{+_{g_b}} = 0$ means that $c$ lies in the negative definite subspace $H^-_{g_b}(X)$. Therefore $S_b$ is a sphere of dimension $b^-(X)-1$. In particular $S_b$ is compact. Similarly, let
\[
S = \bigcup_{b \in B} S_b = \{ c \in C_N \; | \; c^{+_{g_b}} = 0 \text{ for some } b \in B \}.
\]
Then $S$ is a compact subset of $C_N$ (by compactness of $B$). Choose an isometry $H^2(X ; \mathbb{R}) \cong \mathbb{R}^{r,s}$ where $r = b^+(X) > 1$ and $s = b^-(X)>1$. We can further identify $\mathbb{R}^{r,s}$ with $\mathbb{R}^r \oplus \mathbb{R}^s$ with bilinear form $\langle (x_1 , y_1) , (x_2 , y_2) \rangle_{r,s} = \langle x_1 , x_2 \rangle - \langle y_1 , y_2 \rangle$, where $\langle x_1 , x_2 \rangle$ and $\langle y_1 , y_2 \rangle$ are the standard inner products on $\mathbb{R}^r$ and $\mathbb{R}^s$. Then if $c = (x,y) \in H^2(X ; \mathbb{R}) \cong \mathbb{R}^{r+s}$, we have $c^2 = x^2 - y^2$, where $x^2 = \langle x , x \rangle$ and $y^2 = \langle y , y \rangle$. It follows that $C_N \cong \{ (x,y) \in \mathbb{R}^{r+s} \; | \; x^2 - y^2 = N \}$. Define a Euclidean norm $|| \; ||_{E}$ on $H^2( X ; \mathbb{R})$ by setting $|| c ||_{E}^2 = x^2 + y^2$. By compactness of $S$, we have that $S$ is contained in some ball $B_R = \{ x \in H^2(X ; \mathbb{R}) \; | \; ||x ||_E \le R \}$ of sufficiently large radius $R>0$. Then $C_N \setminus (C_N \cap B_R)$ may be identified with
\[
\{ (x,y) \in \mathbb{R}^{r+s} \; | \; x^2 - y^2 = N, \; x^2 + y^2 > R^2 \}.
\]
Equivalently, this is the set of pairs $(x,y)$ such that $x^2 > (R^2+N)/2$ and $y^2 = x^2 - N$. Recall that $N < 0$. Hence if $R^2 > -N$, we see that this space is homotopy equivalent to $S^{b^+(X)-1} \times S^{b^-(X)-1}$, which is connected as $b^+(X), b^-(X) > 1$. For any $c \in C_N \setminus (C_N \cap B_R)$, define $w(c) : S^{n+1} \to H$ as $w(c) = \varphi( 2 \pi c^{+_g} )$. Then since $c \notin B_R$, we have that $w(c)$ is non-vanishing and hence the degree of $w(c)/||w(c)||_H$ is defined. Since $C_N \setminus (C_N \cap B_R)$ is connected, the degree of $w(c)/||w(c)||_H$ is equal to a constant, $\kappa$, for every $c \in C_N \setminus (C_N \cap B_R)$.

Let $\mathfrak{s}$ be a valid spin$^c$-structure. Then $c_1(\mathfrak{s}) \in C_N$. If $c_1(\mathfrak{s}) \notin B_R$, then it follows that ${\rm deg}( w(\mathfrak{s}) ) = \kappa$. Next, we note that if $c_1(\mathfrak{s}) \in B_R$, then $c_1(\mathfrak{s}) \in B_R \cap H^2(X ; \mathbb{Z})$. But $B_R \cap H^2(X ; \mathbb{Z})$ is finite because $B_R$ is compact and $H^2(X ; \mathbb{Z})$ is discrete. It follows that for all but finitely many valid $\mathfrak{s}$, we have ${\rm deg}(w(\mathfrak{s})) = \kappa$.

Now we argue that $\kappa = 0$. We showed that for all $c \in C_N$ outside some ball $B_R$, the degree of $w(c)$ is $\kappa$. Let $\psi : H^2(X ; \mathbb{R}) \to H^2(X ; \mathbb{R})$ be an isometry of the intersection form on $H^2(X ; \mathbb{R})$ that sends $H$ to itself and reverses orientation on $H$. Then $B_R \cup \psi(B_R)$ is compact so there exists a $c \in C_N$ such that $c \notin B_R \cup \psi(B_R)$. Hence $c,\psi(c) \in C_N \setminus (C_N \cap B_R)$. Therefore 
\[
{\rm deg}( w(c) ) = {\rm deg}( w( \psi(c) ) ) = \kappa.
\]
On the other hand, since $\psi$ reverses orientation on $H$, we have 
\[
{\rm deg}( w(\psi(c))) = -{\rm deg}( w(c) ) = -\kappa.
\]
This gives $\kappa = -\kappa$, hence $\kappa = 0$. Thus ${\rm deg}( w(\mathfrak{s})) = 0$ for all but finitely many valid $\mathfrak{s}$ and the proof is complete.
\end{proof}

\section{The Einstein family}\label{sec:ein}

Let $X$ be the underlying oriented smooth $4$-manifold of a complex $K3$ surface. We will use the moduli space of Einstein metrics on $X$ to construct non-trivial families. The construction of this moduli space follows \cite{gia}, \cite{gkt} and \cite{bako3}.

Let $Ein$ denote the space of all Einstein metrics on $X$ with unit volume given the $\mathcal{C}^\infty$-topology. Every Einstein metric on $X$ is a hyper-k\"ahler metric \cite{hit} and in particular, Ricci flat. Then ${\rm Diff}(X)$ acts on $Ein$ by pullback. That is, for each $\varphi \in {\rm Diff}(X)$ we define
\[
\varphi_* : Ein \to Ein, \quad \varphi_*(g) = (\varphi^{-1})^*(g).
\]
Let ${\rm TDiff}(X)$ denote the subgroup of ${\rm Diff}(X)$ consisting of those diffeomorphisms which act trivially on $H^2(X ; \mathbb{Z})$. Let $Aut( H^2( X ; \mathbb{Z}))$ be the group of automorphisms of the intersection form. Then we have a short exact sequence
\[
1 \to {\rm TDiff}(X) \to {\rm Diff}(X) \to \Gamma \to 1,
\]
where $\Gamma \subset Aut( H^2( X ; \mathbb{Z}))$ is the subgroup of automorphisms that are induced by diffeomorphisms of $X$. Note that since $\Gamma$ is discrete, the identity components of ${\rm TDiff}(X)$ and ${\rm Diff}(X)$ are the same
\begin{equation}\label{equ:tdiff0}
{\rm TDiff}_0(X) = {\rm Diff}_0(X).
\end{equation}

As a consequence of the global Torelli theorem for $K3$ surfaces, one finds that ${\rm TDiff}(X)$ acts freely and properly on $Ein$ \cite[\textsection 4]{gia}. Let
\[
T_{Ein} = Ein / {\rm TDiff}(X)
\]
be the quotient. Over $Ein$ we have the constant family $X \times Ein \to Ein$. We can equip the vertical tangent space of $X \times Ein$ with the tautological metric, namely the metric on the fibre $X \times \{ g\}$ is $g$ itself. The action of ${\rm Diff}(X)$ on $Ein$ lifts to $X \times Ein$ by setting
\[
\varphi_* ( x , g ) = ( \varphi(x) , \varphi_*(g) ).
\]
It is easily checked that this action preserves the fibrewise metric. Let $E_{Ein} = ( X \times Ein)/{\rm TDiff}(X)$ be the quotient of $X \times Ein$ by the action of ${\rm TDiff}(X)$. 
\begin{lemma}
$E_{Ein}$ is a locally trivial smooth family over $T_{Ein}$ with fibres diffeomorphic to $X$. 
\end{lemma}
\begin{proof}
Let $Met$ denote the space of all metrics on $X$ with the $\mathcal{C}^\infty$ topology. The Ebin slice theorem \cite[Theorem 7.4]{eb}, \cite[Slice Theorem]{ck} implies that $Met/{\rm TDiff}(X)$ has the structure of an infinite dimensional orbifold. Namely if $g \in Met$ and $S_g \subset Met$ is a local slice around $g$ as given by the Ebin slice theorem, then the isometry group $I(g)$ of $g$ preserves $S_g$ and the quotient $S_g/(I(g) \cap {\rm TDiff(X)})$ may be identified with a neighbourhood of $[g] \in Met/{\rm TDiff}(X)$. In particular, the isotropy group of $[g]$ is $I(g) \cap {\rm TDiff}(X)$. The quotient $\mathcal{U} = (X \times Met)/{\rm TDiff}(X)$ is a smooth orbifold bundle over $Met/{\rm TDiff}(X)$. Away from the singular points of $Met/{\rm TDiff}(X)$, it is a smooth fibre bundle with fibre $X$. The inclusion $Ein \to Met$ descends to an inclusion $\iota : T_{Ein} \to Met/{\rm TDiff}(X)$ whose image is disjoint from the singuarities. This is because if $g$ is an Einstein metric on $X$, then $I(g) \cap {\rm TDiff}(X) = 1$ \cite[Lemma 4.4]{gia}. The deformation theory of Einstein metrics around K\"ahler--Einstein metrics \cite[Theorem 10.5, Corollary 3.5]{koi} implies that $T_{Ein}$ is a finite-dimensional, smoothy embedded submanifold of $Met/{\rm TDiff}(X)$. So the pullback $E_{Ein} = \iota^* \mathcal{U}$, is a smooth, locally trivial fibre bundle over $T_{Ein}$ with fibres diffeomorphic to $X$.
\end{proof}

The tautological metric on $X \times Ein$ descends to a metric $g_{Ein}$ on the vertical tangent bundle such that the restriction of $g_{Ein}$ to the fibre of $E_{Ein}$ over $[g] \in T_{Ein}$ is a representative of the isomorphism class of Einstein metrics $[g]$. Thus $E_{Ein}$ is a family of Einstein metrics on $X$.

Let $Gr_3( \mathbb{R}^{3,19})$ denote the Grassmannian of positive definite $3$-planes in $\mathbb{R}^{3,19}$. There is a period map
\[
P : T_{Ein} \to Gr_3( \mathbb{R}^{3,19})
\]
defined as follows. Fix an isometry $H^2(X ; \mathbb{R}) \cong \mathbb{R}^{3,19}$. Then $P$ sends an Einstein metric $g$ to the $3$-plane $H^+_g(X)$. Let
\[
\Delta = \{ \delta \in H^2(X ; \mathbb{Z}) \; | \; \delta^2 = -2 \}
\]
and set
\[
W = \{ H \in Gr_3(\mathbb{R}^{3,19}) \; | \; H^\perp \cap \Delta = \emptyset \}.
\]
The Grassmannian $Gr_3(\mathbb{R}^{3,19})$ is a contractible manifold (it is a symmetric space of non-compact type) and for each $\delta \in \Delta$, the subset
\[
A_\delta = \{ H \in Gr_3(\mathbb{R}^{3,19}) \; | \; \delta \in H^\perp \}
\]
is a codimension $3$ embedded submanifold and $W = Gr_3(\mathbb{R}^{3,19}) \setminus \bigcup_{\delta \in \Delta} A_\delta$. It follows from the global Torelli theorem for $K3$ that the period map $P$ is a homeomorphism of $T_{Ein}$ to the set $W$ \cite[Chapter 12, K]{bes}.

Let $Gr_3^+(\mathbb{R}^{3,19})$ denote the set of pairs $(H , \mathfrak{o})$ where $H \in Gr_3(\mathbb{R}^{3,19})$ is a positive definite $3$-plane and $\mathfrak{o}$ is an orientation on $H$. The forgetful map $Gr_3^+(\mathbb{R}^{3,19}) \to Gr(\mathbb{R}^{3,19})$ is a double covering space. However $Gr_3(\mathbb{R}^{3,19})$ is contractible and so $Gr_3^+(\mathbb{R}^{3,19})$ is the trivial double covering consisting of two copies of $Gr_3(\mathbb{R}^{3,19})$. Let $g$ be an Einstein metric on $X$. Then $g$ is a hyper-k\"ahler metric with holonomy group equal to $Sp(1)$. Let $I,J,K$ be a hyper-k\"ahler triple of complex structures for $g$. Let $\omega_I, \omega_J, \omega_K$ be the correspondinf K\"ahler forms. Then $\{ \omega_I , \omega_J , \omega_K\}$ defined an oriented basis for $H^+_g(X)$. Since $g$ has holonomy $Sp(1)$, the triple $\omega_I , \omega_J , \omega_K$ is determined up to an $SO(3)$ transformation. Hence we obtain a canonical orientation on $H^+_g(X)$. Furthermore, if two Einstein metrics $g,g'$ define the same point in the moduli space $T_{Ein}$ so that $H^+_g(X) = H^+_{g'}(X)$, then the induced orientations are the same \cite[Lemma 2.1]{bako3}. This means that the period map $P : T_{Ein} \to Gr_3(\mathbb{R}^{3,19})$ admits a canonical lift $\widetilde{P} : T_{Ein} \to Gr_3^+(\mathbb{R}^{3,19})$. Furthermore since $T_{Ein}$ is path-connected \cite[Page 4]{bako3}, it follows that the image of $\widetilde{P}$ is contained in a single component of $Gr_3^+(\mathbb{R}^{3,19})$. Let us denote this component by $Gr_3'(\mathbb{R}^{3,19})$. The forgetful map $Gr_3^+(\mathbb{R}^{3,19}) \to Gr(\mathbb{R}^{3,19})$ restricted to $Gr_3'(\mathbb{R}^{3,19})$ gives an homeomorphism $Gr_3'(\mathbb{R}^{3,19}) \cong Gr_3(\mathbb{R}^{3,19})$ and in this way, every $H \in Gr_3(\mathbb{R}^{3,19})$ inherits an orientation.

Choose an element $v \in H^2(X ; \mathbb{R})$ such that $\langle v , \delta \rangle \neq 0$ for all $\delta \in \Delta$ and define $\Delta^{\pm} = \{ \delta \in \Delta \; | \; \pm \langle v , \delta \rangle > 0 \}$. Then
\[
\Delta = \Delta^+ \cup \Delta^-
\]
and $\delta \in \Delta^+$ if and only if $-\delta \in \Delta^-$.

Let $\delta \in \Delta^+$ and choose a point $p \in A_\delta$ such that $p$ does not lie on any $A_{\delta'}$ for $\delta' \in \Delta^+$ other than $\delta$ (each $A_{\delta'}$ intersects $A_\delta$ in a closed embedded submanifold of positive codimension. The set $\Delta$ is countable, so $\{ A_{\delta'} \}_{\delta' \in \Delta \setminus\{\pm \delta\}}$ does not cover $A_\delta$). Let $H \subset H^2(X ; \mathbb{R})$ be the positive definite $3$-plane corresponding to $p$. As explained above $H$ can be given a canonical orientation. Choose an oriented basis $\theta_1, \theta_2, \theta_3$ for $H$ satisfying $\langle \theta_i , \theta_j \rangle = \delta_{ij}$. Since $p \in A_\delta$, we have $\langle \theta_j , \delta \rangle = 0$ for $j = 1,2,3$. Moreover, since $p$ does not lie on $A_{\delta'}$ for any $\delta' \in \Delta^+$ not equal to $\delta$, we have $\langle \theta_j , \delta' \rangle \neq 0$ for some $j$.

Let $B = S^2 = \{ (x_1,x_2,x_3) \in \mathbb{R}^3 \; | \; x_1^2 + x_2^2 + x_3^2 = 1 \} \subset \mathbb{R}^3$ be the unit $2$-sphere and choose an $\epsilon \in (0,1)$. Consider the map 
\[
f_\delta : S^2 \to Gr_3( \mathbb{R}^{3,19})
\]
defined by
\[
f_\delta(x_1,x_2,x_3) = {\rm span}( \omega_1 , \omega_2 , \omega_3 )
\]
where for $i=1,2,3$, we set
\[
\omega_i = \theta_i - \epsilon x_i \delta/2.
\]
We choose $\epsilon$ sufficiently small so that $f_\delta(x_1,x_2,x_3)$ is a positive definite subspace of $H^2(X ; \mathbb{R})$.

\begin{lemma}
If $\epsilon$ is sufficiently small then $f_\delta(x_1,x_2,x_3)$ does not lie on $A_{\delta'}$ for any $\delta' \in \Delta^+ \setminus \{\delta\}$.
\end{lemma}
\begin{proof}
Consider the decomposition $\mathbb{R}^{3,19} \cong H \oplus K$, where $K = H^\perp$. Then any $x \in \mathbb{R}^{3,19}$ uniquely decomposes as $x = x_H + x_K$, where $x_H \in H$, $x_K \in K$. Let $\alpha \in \Delta^+ \setminus \{\delta\}$. We first show that if $\epsilon^2 < 4 (\alpha_H)^2/\sqrt{2(\alpha_H^2+2)}$, then $f_\delta(x_1,x_2,x_3) \notin A_\alpha$ for any $(x_1,x_2,x_3) \in S^3$. To see this, note that $f_\delta(x_1,x_2,x_3) \in A_\alpha$ if and only if $\langle \omega_i , \alpha \rangle = 0$ for $i=1,2,3$. But
\[
\langle \omega_i , \alpha \rangle = \langle \theta_i , \alpha \rangle - \frac{1}{2}\epsilon x_i \langle \delta , \alpha \rangle.
\]
Let $v,w \in \mathbb{R}^3$ be given by $v = ( \langle \theta_1 , \alpha \rangle , \langle \theta_2 , \alpha \rangle , \langle \theta_3 , \alpha \rangle )$ and $w = \frac{1}{2}\epsilon \langle \delta , \alpha \rangle ( x_1  , x_2 , x_3)$. Then $f_\delta(x_1,x_2,x_3) \in A_\alpha$ if and only if $v = w$. Let $|| \; ||$ denote the standard norm on $\mathbb{R}^3$. Then if $||w|| < ||v||$, it follows that $f_\delta(x_1,x_2,x_3) \notin A_\alpha$. But $||v||^2 = \langle \theta_1 , \alpha \rangle^2 + \langle \theta_2 , \alpha \rangle^2 + \langle \theta_3 , \alpha \rangle^2 = \alpha_H^2$ and 
\[
||w||^2 = \frac{1}{4} \epsilon^2 \langle \delta , \alpha \rangle^2 (x_1^2 + x_2^3 + x_3^2) = \frac{1}{4} \epsilon^2 \langle \delta , \alpha \rangle^2.
\]
Recall that $\delta \in H^\perp$, hence $\langle \alpha , \delta \rangle = \langle \alpha_K , \delta \rangle$. Now since $\alpha_K , \delta$ lie in the negative definite space $H^\perp$, we can apply Cauchy--Schwarz to $-\langle \; , \; \rangle$ on $H^\perp$ to deduce that
\[
\langle \alpha , \delta \rangle^2 = \langle \alpha_K , \delta \rangle^2 \le \sqrt{ |\alpha_K^2| | \delta^2 |} = \sqrt{ 2|\alpha_K^2|}.
\]
Then since $\alpha^2 = -2 = \alpha_H^2 - |\alpha_K^2|$, we get 
\[
\langle \alpha , \delta \rangle^2 \le \sqrt{ 2(\alpha_H^2+2)}
\]
and thus
\[
||w||^2 \le \frac{1}{4}\epsilon^2 \sqrt{ 2(\alpha_H^2+2)}.
\]
Hence if $\epsilon^2 < 4 (\alpha_H)^2/\sqrt{2(\alpha_H^2+2)}$, then $||w||^2 < ||v||^2$ and $f_\delta(x_1,x_2,x_3) \notin A_\alpha$.

Let $g(t) = 4t/\sqrt{2(t+2)}$. This is an increasing function on $[0,\infty)$. Suppose $\epsilon^2 < g(1) = 4/\sqrt{6}$. Then $f_\delta(x_1,x_2,x_3) \notin A_\alpha$ for all $\alpha$ such that $\alpha_H^2 \ge 1$. On the other hand, we claim that there are only finitely many $\alpha \in \Delta^+ \setminus \{\delta\}$ with $\alpha_H^2 < 1$. To see this, note that $-2 = \alpha^2 = \alpha_H^2 - |\alpha_K^2|$, so $|\alpha_K^2| = \alpha_H^2 +2 < 3$ and so $\alpha_H^2 + |\alpha_K^2| < 4$. But the map $x \mapsto ||x|| =  x_H^2 + |x_K^2|$ defines a norm on $\mathbb{R}^{3,19}$. So the set $\{ \alpha \in \Delta^+ \setminus \{\delta \} \; | \; \alpha_H^2 \le 1 \}$ is closed and bounded, hence compact. It is also discrete, hence finite. Thus by choosing $\epsilon$ such that $\epsilon^2 < 4/\sqrt{6}$ and such that $\epsilon^2 < g( \alpha_H^2)$ for the finitely many $\alpha \in \Delta^+ \setminus \{\delta\}$ with $\alpha_H^2 < 1$, we have that $f_\delta(x_1,x_2,x_3)$ does not lie on $A_{\delta'}$ for any $\delta' \in \Delta^+ \setminus \{\delta\}$.
\end{proof}

By the Lemma, we may choose an $\epsilon$ such that $f_\delta(x_1,x_2,x_3)$ does not lie on $A_{\delta'}$ for any $\delta' \in \Delta^+$ not equal to $\delta$. Moreover, we have
\[
( \langle \omega_1 , \delta \rangle , \langle \omega_2 , \delta \rangle , \langle \omega_3 , \delta \rangle ) = \epsilon(x_1,x_2,x_3).
\]
Then since $(x_1,x_2,x_3) \in S^2$, we see that at least one of $\langle \omega_1$, $\delta \rangle , \langle \omega_2 , \delta \rangle$, and $\langle \omega_3 , \delta \rangle$ must be non-zero. So $f(x_1,x_2,x_3)$ does not lie on $A_\delta$. It follows that $f$ maps to $W$. So there is a map $g_\delta : B \to T_{Ein}$ such that $f_\delta = P \circ g_\delta$, namely $g_\delta = P^{-1} \circ f_\delta$.

\begin{definition}
Let $E_\delta = g_\delta^*( E_{Ein})$ be the pullback of the family $E_{Ein} \to T_{Ein}$ by the map $g_\delta : S^2 \to T_{Ein}$. This is a smooth family of $K3$ surfaces over $S^2$.
\end{definition}

\begin{remark}
In addition to $\delta$, the map $g_\delta : S^2 \to T_{Ein}$ depends on a choice of $p \in A_\delta \setminus (A_\delta \cap ( \cup_{\delta' \in \Delta^+ \setminus \{\delta\} } A_{\delta'})$, a sufficiently small $\epsilon > 0$ and a choice of oriented orthonormal basis $(\{\theta_1 , \theta_2 , \theta_3\})$. We will show this space is path-connected. Since moving $(p , \epsilon , \{\theta_1 , \theta_2 , \theta_3\} )$ along a path only changes $g_\delta$ by isotopy, this will imply that that family $E_\delta$ is well defined up to isomorphism. To see connectedness, first note that $A_\delta \setminus (A_\delta \cap ( \cup_{\delta' \in \Delta^+ \setminus \{\delta\} } A_{\delta'})$ is path-connected. Any two points $p_0, p_1$ can be joined by a smooth path $p_t$ valued in $A_\delta$. By \cite[Theorem 2.5]{hir} we can assme $p_t$ is transverse to the countably many codimension $3$ submanifolds $\{ A_\delta \cap A_{\delta'} \}_{\delta' \in \Delta^+ \setminus \{\delta\}}$ in $A_\delta$. Since $p_t$ is $1$-dimensional, transversality means $p_t$ is disjoint from each $A_\delta \cap A_{\delta'}$ and hence $p_t$ is a path in $p \in A_\delta \setminus (A_\delta \cap ( \cup_{\delta' \in \Delta^+ \setminus \{\delta\} } A_{\delta'})$. Next, using compactness of $[0,1]$ we can find an $\epsilon$ such that $(p_t , \epsilon)$ is in the space of pairs for all $t$. Lastly, the space of all triples $(p , \epsilon , \{ \theta_1 , \theta_2 , \theta_3 \} )$ is a principal $SO(3)$-bundle over the space of pairs $(p , \epsilon)$, so it is also path-connected.
\end{remark}

\begin{remark}
By construction, the family $E_{Ein} \to T_{Ein}$ has structure group ${\rm TDiff}(X)$, hence the same is true of the pullback family $E_\delta$. To say that $E_\delta$ has structure group ${\rm TDiff}(X)$ amounts to saying that $E_\delta$ is equipped with a trivialisation $\mathcal{H}^2(X) \cong H^2(X ; \mathbb{R}) \times B$ of the local system $\mathcal{H}^2(X)$.
\end{remark}

For each $\delta \in \Delta^+$ we have constructed a homotopy class of map $g_\delta : S^2 \to T_{Ein}$. As $T_{Ein}$ is simply connected \cite[Page 4]{bako3}, there is a bijection between unbased homotopy classes of maps $S^2 \to T_{Ein}$ and the homotopy group $\pi_2( T_{Ein})$. Hence $g_\delta$ defines a class $[g_\delta] \in \pi_2(T_{Ein})$. Now since $T_{Ein} = Ein/{\rm TDiff}(X)$, the long exact sequence of homotopy groups gives a map
\[
\partial : \pi_2(T_{Ein}) \to \pi_1( {\rm TDiff}_0(X) ) = \pi_1({\rm Diff}_0(X))
\]
where the second equality is by Equation (\ref{equ:tdiff0}). In particular, we may define $h_\delta = \partial [ g_\delta ] \in \pi_1({\rm Diff}_0(X))$. Applying the clutching construction to $h_\delta$, we recover the family $E_\delta$.

Since $T_{Ein}$ is simply connected, the Hurwitz theorem gives an isomorphism $\pi_2( T_{Ein} ) = H_2( T_{Ein} ; \mathbb{Z})$. From \cite[Lemma 5.3]{gia}, we have
\[
H_2( T_{Ein} ; \mathbb{Z}) = \bigoplus_{\delta \in \Delta^+} \mathbb{Z}[g_\delta].
\]
Hence $\pi_2( T_{Ein} )$ is a free abelian group with generators the maps $\{ [g_\delta] \}_{\delta \in \Delta^+}$.

Recall that a $K3$ surface satisfies 
\[
b^+(X) = 3, \; \sigma(X) = -16, \; b_1(X) = 0.
\]
Since $X$ is spin and simply-connected, for each $u \in H^2( X ; \mathbb{Z})$, there is a uniquely determined spin$^c$-structure $\mathfrak{s}_u$ for which $c_1(\mathfrak{s}_u) = 2u$. Then
\[
d(X , \mathfrak{s}_u) = \frac{ (2u)^2 + 16 }{4} - 1 - 3 = u^2.
\]

Let $\alpha \in \Delta$. Then $\alpha^2 = -2$ and hence $d(X , \mathfrak{s}_\alpha ) = -2$. Therefore we have the Seiberg--Witten invariant
\[
sw_{\mathfrak{s}_\alpha} : \pi_1( {\rm Diff}_0(X) ) \to \mathbb{Z}.
\]
To simplify notation we will write $sw_{\alpha}$ for $sw_{\mathfrak{s}_\alpha}$. If $\alpha \in \Delta$, then from Proposition \ref{prop:conj}, we have
\[
sw_{-\alpha} = -sw_{\alpha}.
\]
For this reason, it suffices to only consider the homomorphisms $sw_\alpha$ for $\alpha \in \Delta^+$.

From Theorem \ref{thm:finite}, we have that for each $f \in \pi_1({\rm Diff}(X))$, $sw_\alpha(f)$ is non-zero for only finitely many $\alpha \in \Delta^+$. Therefore, we obtain a homomorphism
\[
sw : \pi_1({\rm Diff}_0(X)) \to \bigoplus_{\alpha \in \Delta^+} \mathbb{Z}, \quad sw(f) = \oplus_{\alpha} sw_\alpha(f).
\]

Recall that we constructed classes $h_\delta = \partial [g_\delta] \in \pi_1({\rm Diff}_0(X))$ such that the family $E_\delta$ is obtained from the clutching construction applied to $h_\delta$.

\begin{theorem}\label{thm:swinv}
Let $\alpha, \delta \in \Delta^+$. Then
\[
sw_\alpha( h_\delta ) = \begin{cases} 1 & \text{if } \alpha = \delta, \\ 0 & \text{otherwise}. \end{cases}
\]
\end{theorem}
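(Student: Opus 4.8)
The plan is to compute the families Seiberg--Witten invariant $sw_\alpha(h_\delta) = \int_{S^2} SW(E_\delta, \mathfrak{s}_\alpha, \mathfrak{o})$ directly, exploiting the fact that $E_\delta$ carries a fibrewise Einstein (hence hyperk\"ahler) metric $g_{Ein}$, for which the Seiberg--Witten equations can be analyzed explicitly. First I would recall that on a fixed $K3$ surface with a hyperk\"ahler metric $g$ and zero perturbation, the Seiberg--Witten equations for a spin$^c$-structure $\mathfrak{s}_u$ (with $c_1 = 2u$) have a solution only if the self-dual projection $u^{+_g}$ vanishes, i.e. only if $u$ lies in the anti-self-dual subspace $H^-_g(X)$; in that case $u^2 = (u^{-_g})^2 \le 0$, and the unique irreducible solution (up to gauge) arises from an effective divisor. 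For our situation $u = \alpha$ with $\alpha^2 = -2$, so the relevant metrics are exactly those $g$ with $\alpha \in H^-_g(X)$, equivalently $P(g) \in A_\alpha$. Thus the parametrised moduli space $\mathcal{M}(E_\delta, \mathfrak{s}_\alpha, g_{Ein}, 0)$ projects onto the set of $b \in S^2$ with $g_\delta(b) \in A_\alpha$, i.e. with $f_\delta(b) \in A_\alpha$.

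Next I would use the explicit form of $f_\delta$. By construction $f_\delta(x_1,x_2,x_3) = \mathrm{span}(\omega_1,\omega_2,\omega_3)$ with $\omega_i = \theta_i - \epsilon x_i \delta/2$, and $(\langle \omega_1,\delta\rangle, \langle\omega_2,\delta\rangle,\langle\omega_3,\delta\rangle) = \epsilon(x_1,x_2,x_3)$, and moreover $f_\delta(b)$ avoids $A_{\delta'}$ for every $\delta' \in \Delta^+\setminus\{\delta\}$. Hence $f_\delta(b) \in A_\alpha$ for some $\alpha \in \Delta^+$ forces $\alpha = \delta$, and $f_\delta(b) \in A_\delta$ holds iff $(x_1,x_2,x_3) = 0$, which never happens on $S^2$. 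So for $\alpha \ne \delta$ the moduli space $\mathcal{M}(E_\delta, \mathfrak{s}_\alpha, g_{Ein}, 0)$ is empty, $(g_{Ein},0)$ is a regular pair defining a chamber, and by the wall-crossing analysis of Theorem \ref{thm:finite} the canonical-chamber invariant is computed by the degree of $b \mapsto w(\mathfrak{s}_\alpha)_b/\|w(\mathfrak{s}_\alpha)_b\|$; I would observe that $w(\mathfrak{s}_\alpha)_b = \varphi(2\pi (2\alpha)^{+_{g_b}})$, that $\alpha$ has a \emph{constant} nonzero self-dual part bounded away from zero over the (contractible-image) family since $f_\delta(b)$ stays near $p \notin A_\alpha$, and that this map is null-homotopic — giving $sw_\alpha(h_\delta) = 0$.

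For $\alpha = \delta$ the picture is different: now $w(\mathfrak{s}_\delta)_b = \varphi(2\pi(2\delta)^{+_{g_b}})$ and $\delta^{+_{g_b}}$ is, up to the isomorphism $\varphi$ and bounded error, proportional to $\epsilon(x_1,x_2,x_3)$, so $w(\mathfrak{s}_\delta)$ is a nonvanishing section of $\mathcal{H}^+_{g_{Ein}}(X)$ whose associated map $S^2 \to S(H) \cong S^2$ has degree $\pm 1$. Meanwhile, with zero perturbation, $\mathcal{M}(E_\delta, \mathfrak{s}_\delta, g_{Ein}, 0)$ is again empty (the self-dual part of $\delta$ never vanishes on $S^2$), so $(g_{Ein},0)$ defines a chamber with $SW(\mathfrak{s}_\delta, 0) = 0$, but this is \emph{not} the canonical chamber. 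I would then invoke the wall-crossing formula \eqref{equ:wcf}: $\int_{S^2} SW(\mathfrak{s}_\delta, 0) = sw_\delta(h_\delta) - \deg(\phi_{\mathfrak{s}_\delta, 0})$ with $\phi_{\mathfrak{s}_\delta,0} = w(\mathfrak{s}_\delta)/\|w(\mathfrak{s}_\delta)\|$, so $sw_\delta(h_\delta) = \deg(w(\mathfrak{s}_\delta))$, which equals $\pm 1$ by the degree computation above. The remaining work is to pin down the sign to be $+1$: this requires chasing through the chosen orientations — the homology orientation $\mathfrak{o}$ of $X$, the orientation of $S^2$, and the sign conventions in the trivialisation $\varphi$ and in Proposition 5.7 of \cite{bako2} — together with fixing $\epsilon > 0$ and the orientation of $H = f_\delta(b_0)$ consistently so that the map $(x_1,x_2,x_3) \mapsto \epsilon(x_1,x_2,x_3)$ is orientation-preserving.

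The main obstacle I expect is precisely this last sign bookkeeping: establishing that the degree of $w(\mathfrak{s}_\delta)$, computed with the canonical homology orientation and a fixed orientation of $S^2$, works out to $+1$ rather than $-1$. (If it comes out $-1$ one simply replaces $h_\delta$ by $-h_\delta$, or adjusts the orientation choices, so this does not affect the main theorem — but stating the theorem with the clean value $1$ forces the calculation to be done carefully.) A secondary technical point is justifying rigorously that $\mathcal{M}(E_\delta, \mathfrak{s}_\delta, g_{Ein}, 0)$ is genuinely empty and that $(g_{Ein}, 0)$ is regular for the family: this follows because the reducible locus is empty (the self-dual part of $\delta$ is nowhere zero on $S^2$) and there are no irreducibles for a hyperk\"ahler metric when $c_1(\mathfrak{s})^{+_g} \ne 0$, so the moduli space is literally empty and regularity is vacuous.
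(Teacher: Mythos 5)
Your proposal follows essentially the same route as the paper: use the fibrewise Ricci-flat (hyperk\"ahler) metric $g_{Ein}$ together with the Weitzenb\"ock formula to force all unperturbed solutions to be reducible, observe that the wall section $w(\alpha)$ is nowhere vanishing on $S^2$ (so the unperturbed moduli space is empty and $(g_{Ein},0)$ defines a chamber), and then invoke the wall-crossing identity \eqref{equ:wcf} to reduce $sw_\alpha(h_\delta)$ to the degree of $w(\alpha)/||w(\alpha)||_H : S^2 \to S(H)$, which the paper computes exactly as you indicate by expanding $w(\delta) = 4\pi\epsilon(x_1,x_2,x_3) + O(\epsilon^2)$ (degree $1$) and $w(\alpha) \approx \text{const} + O(\epsilon)$ for $\alpha \neq \delta$ (degree $0$). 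One small inaccuracy worth flagging: on a hyperk\"ahler $K3$ the unperturbed Seiberg--Witten equations admit no irreducible solutions at all, since zero scalar curvature forces $\psi = 0$, so your remark about the irreducible solution arising from an effective divisor is a red herring borrowed from the positive-scalar-curvature K\"ahler setting --- but it does not affect your argument, because the moduli spaces in question are empty in any case.
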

\begin{proof}
By definition, $sw_\alpha( h_\alpha )$ is the Seiberg--Witten invariant of $(E_\delta ,  \mathfrak{s}_\alpha)$ with respect to the canonical chamber, where $E_\delta$ is the family obtained by the clutching construction applied to $h_\delta$. 

We recall the construction of $E_\delta$. Choose a point $p \in A_\delta$ such that $p$ does not lie on any $A_{\delta'}$ for $\delta' \in \Delta^+$ other than $\delta$. Let $H \subset H^2(X ; \mathbb{R})$ be the positive definite $3$-plane corresponding to $p$. Choose a basis $\theta_1, \theta_2, \theta_3$ for $H$ satisfying $\langle \theta_i , \theta_j \rangle = \delta_{ij}$. Since $p \in A_\delta$, we have $\langle \theta_j , \delta \rangle = 0$ for $j = 1,2,3$. Let $B = S^2$ be the unit sphere in $\mathbb{R}^3$. We take $E_\delta \to S^2$ to be the pullback of the family $E_{Ein} \to T_{Ein}$ by a map $g_\delta : S^2 \to T_{Ein}$. Let $P : T_{Ein} \to Gr_3( \mathbb{R}^{3,19})$ be the period map. Then $f_\delta = P \circ g_\delta : S^2 \to Gr_3( \mathbb{R}^{3,19})$ is defined as
\[
f_\delta(x_1,x_2,x_3) = {\rm span}( \omega_1 , \omega_2 , \omega_3 ),
\]
where $\epsilon > 0$ is sufficiently small and
\[
\omega_i = \theta_i - \epsilon x_i \delta/2, \text{ for } i = 1,2,3.
\]
Let $\rho_H : H^2(X ;  \mathbb{R}) \to H$ be the projection to $H$ with kernel $H^\perp$. Then since $\theta_1 , \theta_2, \theta_3 \in H$ and $\delta \in H^\perp$, we have
\[
\rho_H( \theta_i ) = \theta_i, \quad \rho_H( \delta ) = 0.
\]
In particular, this gives
\[
\rho_H( \omega_i ) = \theta_i.
\]
Let $\mathcal{H}^+_g(X) \to S^2$ be the bundle of harmonic self-dual $2$-forms of the family $E_\delta$. Then $\omega_1 , \omega_2 , \omega_3$ is a frame for $\mathcal{H}^+_g(X)$. Let $\varphi : \mathcal{H}^+_g(X) \to H$ be the inclusion into $H^2(X ; \mathbb{R})$ followed by $\rho_H$. Define $w(\alpha)$ to be the section of $H$ given by 
\[
w(\alpha)(b) = \varphi( 2 \pi c_1(\mathfrak{s}_{\alpha})^{+_{g_b}} ) = \varphi( 4 \pi \alpha^{+_{g_b}} ).
\]

Let $\omega_1^*, \omega_2^*, \omega_3^*$ be the dual frame of $\mathcal{H}^+_{g_b}$, defined by the condition
\[
\langle \omega_i , \omega_j^* \rangle = \delta_{ij},
\]
where $\langle \; , \; \rangle$ is the intersection pairing on $H^2(X ; \mathbb{R})$. From $\omega_i = \theta_i - \epsilon x_i \delta / 2$, one finds
\[
\langle \omega_i , \omega_j \rangle = \delta_{ij} - \epsilon^2 x_i x_j/2.
\]
One can then directly check that the dual frame is given by
\begin{equation}\label{equ:omegastar}
\omega^*_i = \omega_i + \mu x_i ( x_1 \omega_1 + x_2 \omega_2 + x_3 \omega_3),
\end{equation}
where 
\[
\mu = \frac{ \epsilon^2/2}{1 - \epsilon^2/2}.
\]
We have
\[
\alpha^{+_g} = \langle \alpha , \omega^*_1 \rangle \omega_1 + \langle \alpha , \omega^*_2 \rangle \omega_2 + \langle \alpha , \omega^*_3 \rangle \omega_3.
\]
Applying $\rho_H$, we get
\[
w(\alpha) = 4\pi \left( \langle \alpha , \omega^*_1 \rangle \theta_1 + \langle \alpha , \omega^*_2 \rangle \theta_2 + \langle \alpha , \omega^*_3 \rangle \theta_3 \right).
\] 
We use the basis $\theta_1, \theta_2, \theta_3$ to identify $H$ with $\mathbb{R}^3$. Then
\[
w(\alpha) = 4\pi \left( \langle \alpha , \omega^*_1 \rangle , \langle \alpha , \omega^*_2 \rangle , \langle \alpha , \omega^*_3 \rangle \right).
\]

Suppose that $\alpha \neq \delta$. Then since $p$ does not lie on $A_{\alpha}$, we have $\langle \theta_j , \alpha \rangle \neq 0$ for some $j$. From (\ref{equ:omegastar}), we get
\[
w(\alpha) = 4\pi \left( \langle \alpha , \theta_1 \rangle  , \langle \alpha , \theta_2 \rangle , \langle \alpha , \theta_3 \rangle \right) + O(\epsilon)
\]
where $O(\epsilon)$ denotes terms of order $\epsilon$. Then (for sufficiently small $\epsilon$) it follows that $w(\alpha)$ is non-vanishing and that ${\rm deg}( w(\alpha) ) = 0$. More precisely, since $w$ is non-vanishing, it defines a map $w/||w| : S^3 \to S(H)$ where $S(H)$ is the unit sphere in $H$ and this map has degree $0$. Here we give $S(H)$ the induced orientation (recall that $H$ has a canonical orientation). The map has degree $0$ because there is a homotopy from $w$ to a constant map, given by contracting the $O(\epsilon)$ term to zero.

Suppose instead that $\alpha = \delta$. In this case we find
\[
w(\alpha) = w(\delta) = 4\pi \epsilon ( x_1 , x_2 , x_3 ) + O(\epsilon^2).
\]
So (for sufficiently small $\epsilon$) $w(\alpha)$ is non-vanishing and ${\rm deg}( w( \alpha ) ) = 1$ (since the map $(x_1,x_2,x_3) \mapsto 4\pi \epsilon (x_1,x_2,x_3)$ has degree $1$). In all cases, we see that $w(\alpha)$ is non-vanishing.

The family $E_{Ein}$ has a fibrewise metric $g_{Ein}$ which is a Ricci flat Einstein metric on each fibre. Let $g = g_\delta^*( g_{Ein} )$ be the fibrewise metric on $E_\delta$ obtained by pullback. Then for each $b \in S^2$, the metric $g_b$ is Ricci flat and in particular has zero scalar curvature. Now consider the families Seiberg--Witten moduli space $\mathcal{M}( E_\delta , \mathfrak{s}_\alpha , g , 0)$ for the zero perturbation $\eta = 0$.

Suppose that $(A , \psi) \in \mathcal{M}( E_\delta , \mathfrak{s}_\alpha , g , 0)$ is a solution to the Seiberg--Witten equations in the family. Then $(A , \psi)$ is the solution of the Seiberg--Witten equations on some fibre $X_b$ with metric $g_b$ and zero perturbation. The Weitzenb\"ock formula together with the Seiberg--Witten equations and the fact that $g_b$ has zero scalar curvature implies that $\psi = 0$ (\cite[Corollary 2.2.6]{nic}). So every solution in $\mathcal{M}( E_\delta , \mathfrak{s}_\alpha , g , 0)$ is reducible.

On the other hand, since $w(\alpha)$ is non-vanishing, the perturbation $\eta = 0$ does not lie on the wall, that is, there are no reducible solutions. So the moduli space $\mathcal{M}( E_\delta , \mathfrak{s}_\alpha , g , 0 )$ is empty. Recall that in the proof of Theorem \ref{thm:finite}, we used the wall crossing formula to deduce the identity
\[
\int_{S^{n+1}} SW(\mathfrak{s} , \eta) = sw_{\mathfrak{s}}(f) - {\rm deg}( \phi_{\mathfrak{s} , \eta} )
\]
(see Equation (\ref{equ:wcf})). Taking $\mathfrak{s} = \mathfrak{s}_{\alpha}$, $f = h_\delta$ and $\eta = 0$, we obtain
\[
\int_{S^{n+1}} SW( \mathfrak{s}_{\alpha} , 0 ) = sw_{\alpha}( h_\delta) - {\rm deg}( w(\alpha) ).
\]
But $\mathcal{M}( E_\delta , \mathfrak{s}_\alpha , g , 0 )$ is empty, so $SW( \mathfrak{s}_{\alpha} , 0 ) = 0$ and hence
\[
sw_{\alpha}( h_\delta ) = {\rm deg}( w(\alpha) ).
\]
Further, we have already shown that
\[
{\rm deg}( w(\alpha) ) = \begin{cases} 1 & \text{if } \alpha = \delta, \\ 0 & \text{otherwise}. \end{cases}
\]
\end{proof}

As an immediate consequence of Theorem, \ref{thm:swinv}, we have:

\begin{theorem}
The homomorphism
\[
sw : \pi_1({\rm Diff}_0(X)) \to \bigoplus_{\alpha \in \Delta^+} \mathbb{Z}
\]
is surjective, hence $\pi_1({\rm Diff}_0(X))$ contains $\bigoplus_{\alpha \in \Delta^+} \mathbb{Z}$ as a direct summand (recall that $\pi_1({\rm Diff}_0(X))$ is abelian).
\end{theorem}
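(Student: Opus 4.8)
The plan is to read off surjectivity directly from Theorem~\ref{thm:swinv} and then split the resulting surjection. First I would recall that, by definition, $\bigoplus_{\alpha \in \Delta^+}\mathbb{Z}$ is the free abelian group on the index set $\Delta^+$, with standard basis $\{e_\delta\}_{\delta\in\Delta^+}$, where $e_\delta$ has a $1$ in the $\delta$-coordinate and $0$ in all others. Theorem~\ref{thm:swinv} says precisely that, for each $\delta\in\Delta^+$, $sw(h_\delta)=\bigoplus_{\alpha}sw_\alpha(h_\delta)=e_\delta$. Since the $e_\delta$ generate the target group, $sw$ is surjective.

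For the ``direct summand'' assertion I would invoke the elementary fact that every short exact sequence of abelian groups $0\to K\to G\to F\to 0$ with $F$ free abelian splits. Explicitly: because $\bigoplus_{\alpha\in\Delta^+}\mathbb{Z}$ is free on $\{e_\delta\}$, the rule $e_\delta\mapsto h_\delta$ extends uniquely to a homomorphism $s:\bigoplus_{\alpha\in\Delta^+}\mathbb{Z}\to\pi_1(Diff(X)_0)$, and $sw\circ s=\mathrm{id}$ since this identity has already been checked on each basis vector $e_\delta$. Consequently $\pi_1(Diff(X)_0)=\ker(sw)\oplus s(\bigoplus_{\alpha\in\Delta^+}\mathbb{Z})$, and the latter summand is isomorphic to $\bigoplus_{\alpha\in\Delta^+}\mathbb{Z}$.

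It remains only to observe that $\Delta^+$ is countably infinite: the $K3$ lattice $H^2(X;\mathbb{Z})$ has indefinite signature $(3,19)$, hence contains infinitely many vectors of square $-2$, and $\Delta=\Delta^+\sqcup\Delta^-$ with $\delta\mapsto-\delta$ a bijection $\Delta^+\to\Delta^-$, so $\Delta^+$ is infinite (and clearly countable). Thus the direct summand above is free abelian of countably infinite rank, as asserted in the abstract. I do not expect any obstacle in this final step: all of the substantive work lies in Theorem~\ref{thm:swinv} and, through it, in Theorems~\ref{thm:hom} and~\ref{thm:finite} together with the geometry of the Einstein moduli space $T_{Ein}$, and the present statement is a purely formal corollary.
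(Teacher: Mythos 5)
Your proposal is correct and matches the paper's reasoning: the paper states this theorem as an immediate consequence of Theorem~\ref{thm:swinv} without a written proof, and your argument (read off surjectivity from $sw(h_\delta)=e_\delta$, then split via $e_\delta\mapsto h_\delta$ using that the target is free abelian, noting $\pi_1$ of a topological group is abelian) is exactly the intended justification. The subsequent theorem in the paper packages the same splitting as a left inverse of $\partial:\pi_2(T_{Ein})\to\pi_1(Diff(X)_0)$, so no new idea is needed beyond what you wrote.
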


\begin{theorem}
The boundary map
\[
\partial : \pi_2( T_{Ein} ) \to \pi_1( {\rm TDiff}_0(X) ) = \pi_1( {\rm Diff}_0(X) )
\]
induced by the fibration $Ein \to Ein/{\rm TDiff}(X) = T_{Ein}$ admits a left inverse, given by
\[
\pi_1( {\rm Diff}_0(X) ) \to \pi_2(T_{Ein}), \quad x \mapsto \bigoplus_{\alpha} sw_\alpha(x) [ g_\alpha ].
\]
\end{theorem}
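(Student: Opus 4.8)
The plan is to exhibit the stated map explicitly, check that it is a well-defined homomorphism into $\pi_2(T_{Ein})$, and then verify it is a left inverse of $\partial$ by evaluating it on a generating set. First I would observe that the target identification is in place: by the Hurewicz theorem ($T_{Ein}$ is simply connected) together with \cite[Lemma~5.3]{gia} we have $\pi_2(T_{Ein}) = H_2(T_{Ein};\mathbb{Z}) = \bigoplus_{\alpha \in \Delta^+}\mathbb{Z}[g_\alpha]$, a free abelian group on the classes $[g_\alpha]$. By Theorem~\ref{thm:hom} each $sw_\alpha : \pi_1(Diff(X)_0) \to \mathbb{Z}$ is a group homomorphism, and by Theorem~\ref{thm:finite} (applicable since a $K3$ surface has $b^+(X)=3>1$ and $b_1(X)=0$) for each fixed $x \in \pi_1(Diff(X)_0)$ one has $sw_\alpha(x)=0$ for all but finitely many $\alpha \in \Delta^+$. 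Hence the assignment $x \mapsto \bigoplus_{\alpha} sw_\alpha(x)[g_\alpha]$ really lands in the \emph{direct sum} $\bigoplus_{\alpha \in \Delta^+}\mathbb{Z}[g_\alpha] = \pi_2(T_{Ein})$, and being a (locally finite) direct sum of homomorphisms it is itself a homomorphism; call it $\Psi : \pi_1(Diff(X)_0) \to \pi_2(T_{Ein})$. This is exactly the map $sw$ of the previous theorem followed by the identification $\bigoplus_\alpha \mathbb{Z} \cong \pi_2(T_{Ein})$.

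Next I would recall that $\partial : \pi_2(T_{Ein}) \to \pi_1(TDiff(X)_0) = \pi_1(Diff(X)_0)$ is the connecting homomorphism in the long exact homotopy sequence of the fibration $TDiff(X) \to Ein \to T_{Ein}$ (using $TDiff(X)_0 = Diff(X)_0$), so in particular it is a group homomorphism, and that by the definition of $h_\delta$ in \textsection\ref{sec:ein} we have $\partial[g_\delta] = h_\delta$ for every $\delta \in \Delta^+$, with $E_\delta$ the family recovered from $h_\delta$ by the clutching construction.

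It then suffices to check $\Psi \circ \partial = \mathrm{id}_{\pi_2(T_{Ein})}$ on the generating set $\{[g_\delta]\}_{\delta \in \Delta^+}$ of the free abelian group $\pi_2(T_{Ein})$. For each $\delta \in \Delta^+$,
\[
\Psi(\partial[g_\delta]) = \Psi(h_\delta) = \bigoplus_{\alpha \in \Delta^+} sw_\alpha(h_\delta)\,[g_\alpha].
\]
By Theorem~\ref{thm:swinv}, $sw_\alpha(h_\delta) = 1$ if $\alpha = \delta$ and $0$ otherwise, so the right-hand side collapses to $[g_\delta]$. Since $\Psi \circ \partial$ and $\mathrm{id}$ are homomorphisms of abelian groups that agree on a generating set, they coincide, which proves that $\Psi$ is a left inverse of $\partial$.

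The genuine content here is entirely carried by the earlier results, above all Theorem~\ref{thm:swinv}; what remains is bookkeeping. The one point I would take care over is that the $[g_\alpha]$ indexing the target of $sw$ are literally the Hurewicz images of the maps $g_\alpha : S^2 \to T_{Ein}$ appearing in \cite[Lemma~5.3]{gia}, so that no hidden change of basis intervenes between "$\bigoplus_\alpha \mathbb{Z}$'' and "$\pi_2(T_{Ein})$''; granting this, the argument above is complete. (In fact, since $\Psi$ is a left inverse, $\partial$ is split injective, and one could additionally remark that $\pi_1(Diff(X)_0) \cong \mathrm{im}(\partial) \oplus \ker(\Psi)$, recovering the main theorem's direct summand, but this is not needed for the statement.)
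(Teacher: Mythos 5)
Your proposal is correct and follows essentially the same route as the paper: identify $\pi_2(T_{Ein})$ with $\bigoplus_{\alpha\in\Delta^+}\mathbb{Z}[g_\alpha]$, recall $\partial[g_\alpha]=h_\alpha$, and apply Theorem~\ref{thm:swinv} to see the composite fixes each generator. You spell out the well-definedness of the target (via Theorems~\ref{thm:hom} and~\ref{thm:finite}) more explicitly than the paper, but this is the same argument.
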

\begin{proof}
Recall that
\[
\pi_2( T_{Ein}) \cong H_2( T_{Ein} ; \mathbb{Z}) \cong \bigoplus_{\alpha \in \Delta^+} \mathbb{Z} [g_\alpha]
\]
and that $\partial [g_\alpha] = h_\alpha$. Hence, if we define $t : \pi_1({\rm Diff}_0(X)) \to \pi_2(T_{Ein})$ to be given by $t(x) = \bigoplus_{\alpha \in \Delta^+} sw_\alpha(x) [g_\alpha]$. Using Theorem \ref{thm:swinv}, it follows that $t \circ \partial = id$, so that $t$ is a left inverse of $\partial$, as claimed.
\end{proof}

From the homeomorphism $P : T_{Ein} \to W = Gr_3(\mathbb{R}^{3,19}) \setminus \bigcup_{\delta \in \Delta} A_\delta$, we see that $T_{Ein}$ is connected. Then since $T_{Ein} = Ein/{\rm TDiff}(X)$, it follows that ${\rm TDiff}(X)$ acts transitively on the connected components of $Ein$ and that the components of $Ein$ are all homeomorphic to each other. Choose arbitrarily a basepoint $p \in Ein$. Since the components of $Ein$ are all homeomorphic, the isomorphism class of $\pi_1(Ein , p)$ does not depend on the choice of $p$ and we simply write $\pi_1(Ein)$. From the long exact sequence in homotopy groups associated to $Ein \to T_{Ein}$ we get an exact sequence
\[
\cdots \to \pi_2( T_{Ein} ) \buildrel \partial \over \longrightarrow \pi_1( {\rm TDiff}_0(X) ) \to \pi_1(Ein) \to \pi_1( T_{Ein}).
\]
We have also seen that $T_{Ein}$ is simply-connected and that $\partial$ admits a left inverse, so we obtain an isomophism
\[
\pi_1({\rm TDiff}_0(X)) = \pi_1( {\rm Diff}_0(X) ) \cong \left( \bigoplus_{\alpha \in \Delta^+} \mathbb{Z} \right) \oplus \pi_1(Ein),
\]
where the summand $\bigoplus_{\alpha \in \Delta^+} \mathbb{Z}$ is detected by the Seiberg--Witten invariants $sw_{\mathfrak{\beta}}$.\\

\begin{remark}
Smooth families over $S^2$ with fibres diffeomorphic to $X$ correspond, via the clutching construction, to elements of $\pi_1({\rm Diff}_0(X))$ considered modulo the conjugation action of ${\rm Diff}(X)$. For this reason we are interested in the action of ${\rm Diff}(X)$ on $\pi_1({\rm Diff}_0(X))$. The Seiberg--Witten invariants are compatible with this action in the following sense. Let $\{ e_\alpha \}_{\alpha \in \Delta^+}$ denote the standard basis for $\bigoplus_{\alpha \in \Delta^+} \mathbb{Z}$. For $f \in {\rm Diff}(X)$ and $x \in H^2(X ; \mathbb{R})$, let us write $f_*(x) = (f^{-1})^*(x)$ so that $(f , x) \mapsto f_*(x)$ is a left action. Let ${\rm Diff}(X)$ act on $\bigoplus_{\alpha \in \Delta^+} \mathbb{Z}$ by setting
\[
f \cdot e_\alpha = \begin{cases} e_{ f_* \alpha } & \text{if } f_*\alpha \in \Delta^+, \\ -e_{ -f_*\alpha } & \text{if } f_*\alpha \in \Delta^-. \end{cases}
\]
Then it follows easily from Propositions \ref{prop:diff} and \ref{prop:conj}  that the map $sw : \pi_1({\rm Diff}_0(X)) \to \bigoplus_{\alpha \in \Delta^+} \mathbb{Z}$ is ${\rm Diff}(X)$-equivariant.

\end{remark}


\bibliographystyle{amsplain}

\end{document}